\documentclass[12pt, reqno]{amsart}

\usepackage[margin=1.1in]{geometry}

\usepackage{amssymb,upref}
\usepackage{amsmath, amsthm}
\usepackage{enumerate, graphicx, float}
\usepackage{relsize, mathrsfs, upgreek, bbm, eufrak}

\usepackage{color}
\usepackage{esint}

\theoremstyle{plain}

\newtheorem{thm}{Theorem}[section]
\newtheorem{lem}[thm]{Lemma}
\newtheorem{coro}[thm]{Corollary}

\theoremstyle{remark}

\newtheorem{remark}[thm]{Remark}
\numberwithin{equation}{section}

\def\XXint#1#2#3{{\setbox0=\hbox{$#1{#2#3}{\int}$ }
\vcenter{\hbox{$#2#3$ }}\kern-.6\wd0}}

\newcommand{\bee}{\begin{equation}}
\newcommand{\eee}{\end{equation}}
\newcommand{\be}{\begin{equation*}}
\newcommand{\ee}{\end{equation*}}

\newcommand{\eps}{\varepsilon}

\newcommand{\na}{\mathbb{N}}

\newcommand{\rn}{\mathbb{R}^n}

\newcommand{\dist}[1]{{\rm dist}(#1)}

\newcommand{\dx}{\, dx}

\DeclareMathOperator*{\esssup}{ess\,sup\,}

\DeclareMathOperator*{\essinf}{ess\,inf\,}

\makeatletter
\@namedef{subjclassname@2020}{\textup{2020} Mathematics Subject Classification}
\makeatother

\begin{document}

\subjclass[2020]{Primary 28A75, 28A80; Secondary 42B37, 30L99.}

\keywords{Muckenhoupt weights, weak porosity, spaces of homogeneous type}

\address{Diego Maldonado, Kansas State University, Department of Mathematics. 138 Cardwell Hall, Manhattan, KS-66506, USA.} \email{dmaldona@ksu.edu}

\title[Weak porosity in spaces of homogeneous type]{Weak porosity in spaces of homogeneous type}
\author[Diego Maldonado]{Diego Maldonado}

\thanks{Partially supported by Simons Foundation grant MPS-TSM-00007229.}

\date{\today}

\begin{abstract} Based on the theory of Muckenhoupt weights, short and conceptually simpler proofs are provided for the following two implications: (1) if $(X, d, \mu)$ is a space of homogeneous type where  $d$-balls are open sets and the Lebesgue differentiation theorem holds true and if $E \subset X$ is a weakly porous set whose maximal $E$-free hole function $\rho_{d, E}$ is doubling, then $\dist{\cdot, E}^{-\alpha} \in A_1(X, d, \mu)$ for some $\alpha > 0$; and (2) now without the assumption on the validity of Lebesgue's differentiation theorem, if $\dist{\cdot, E}^{-\alpha} \in A_1(X, d, \mu)$ for some $\alpha > 0$, then  $\rho_{d, E}$ is doubling. 
\end{abstract}

\maketitle

\section{Introduction and main results}

The property of weak porosity for subsets of $\rn$ was introduced and developed by T.~C.~Anderson, J. Lehrb\"ack, C. Mudarra, and A. V\"ah\"akangas in \cite{ALMV} as follows: a nonempty set $E \subset \rn$ is said to be \emph{weakly porous} if there are constants $\gamma, \sigma \in (0,1)$ such that for every cube $P \subset \rn$ there exists $N \in \na$ and pairwise disjoint subcubes $\{Q_j\}_{j=1}^N \in \mathcal{D}(P)$ (where $\mathcal{D}(P)$ stands for the family of dyadic subcubes of $P$) such that:
\begin{enumerate}[\bf{wp}(i)]
\item the cubes $Q_j$ are $E$-free, meaning $Q_j \cap E = \emptyset$ for $j=1, \ldots, N$, 
\item\label{cond:MP} $|Q_j| \geq \gamma |\mathcal{M}(P)|$ for $j=1, \ldots, N$, where $\mathcal{M}(P) \in \mathcal{D}(P)$ denotes a largest $E$-free dyadic subcube of $P$, that is, $\mathcal{M}(P) \cap E = \emptyset$ and if $R \in \mathcal{D}(P)$ satisfies $R \cap E = \emptyset$, then $|\mathcal{M}(P)| \geq |R|$ (such cube $\mathcal{M}(P)$ may not be unique, but one is fixed), and 
\item $\sum_{j=1}^N |Q_j| \geq \sigma |P|$.
\end{enumerate}
The class of weakly porous sets in $\rn$ strictly includes that of porous sets, which in turn includes the class of $\lambda$-Ahlfors regular sets with $0 < \lambda < n$, see \cite[Section 3]{ALMV}. 

As the main result in \cite{ALMV} stands a characterization of the weak porosity of $E \subset \rn$ in terms of its distance function  $\dist{x, E}:= \inf \{|x - y|: y \in E\}$ for $x \in \rn$; namely,  $E \subset \rn$ is weakly porous if and only if there exists $\alpha >0$ such that  $\dist{\cdot, E}^{-\alpha}$ belongs to the Muckenhoupt class $A_1(\rn)$, that is,
$$
\sup\limits_Q \left( \fint_Q \dist{x, E}^{-\alpha} \dx \right) \left(\essinf_Q  \dist{\cdot, E}^{-\alpha} \right)^{-1} < \infty,
$$
where the supremum is taken over all the cubes $Q \subset \rn$ (always with sides parallel to the coordinate axes) and $\fint_Q u:= \frac{1}{|Q|}\int_Q u$ denotes the average of a function $u$ over $Q$. 

In the larger contexts of complete metric spaces with a doubling measure and that of spaces of homogeneous type, the notion of weak porosity has been defined and developed by C. Mudarra in \cite{Mud25} and by H. Aimar, I. G\'omez, and I. G\'omez-Vargas in \cite{AGG1, IGVtesis}, respectively. More notation is in order. Given a set $X$ and $K \geq 1$, a function $d : X \times X \to [0, \infty)$ is called a \emph{$K$-quasi-distance} on $X$ if it satisfies the following three conditions:
\begin{enumerate}[\bf{QD}(i)]
\item\label{item:dxx:0} $d(x,y) = 0$ if and only if $x=y$; 
\item\label{item:K:quasi:sym} $d$ is symmetric, that is, $d(y,x) = d(x,y)$, for every $x, y \in X$; and
\item\label{item:K:quasi:triangle} $d$ satisfies a \emph{$K$-quasi-triangle inequality}, meaning, 
\begin{equation}\label{def:K}
d(x,y) \leq K ( d(x,z) + d(z,y) ) \quad \forall x, y, z \in X.
\end{equation}
\end{enumerate}
If $d$ is a $K$-quasi-distance on $X$, for some $K \geq 1$, then $(X, d)$ is called a \emph{quasi-metric space} and the $d$-ball centered at $x \in X$ with radius $r >0$ is $B(x,r):=\{y \in X: d(x,y) < r\}$. If $K=1$, a $1$-quasi-distance on $X$ is  a \emph{distance} on $X$ and $(X, d)$ is a \emph{metric space}. Given a quasi-metric space  $(X, d)$, let $\mu$ be a nonnegative measure defined on a $\sigma$-algebra of $X$ which contains all the $d$-balls, then $\mu$ is a \emph{doubling measure} on $(X, d)$ if there exists $C_\mu \geq 1$ such that
\begin{equation}\label{def:Cmu}
0 < \mu(B(x,2r)) \leq C_\mu \, \mu(B(x,r) < \infty, \quad \forall x \in X, r >0. 
\end{equation}
If $\mu$ is a doubling measure on  a quasi-metric space  $(X, d)$, the triple $(X, d, \mu)$ is called a \emph{space of homogeneous type with constants} $(K_d, C_\mu)$, where $K_d \geq 1$ is the quasi-triangle constant for $d$ in \eqref{def:K}.

Following  C. Mudarra in \cite[Definition 3.1]{Mud25} and H. Aimar, I. G\'omez, and I. G\'omez-Vargas in \cite[Definition 3.7]{AGG1}, a nonempty set $E \subset X$ is \emph{weakly porous} in a space of homogeneous type $(X, d, \mu)$ if there exist $\gamma, \sigma \in (0,1)$ such that for every $d$-ball $B=B(x,r)$ there is a collection of pairwise disjoint $d$-balls $\{B(x_j, r_j)\}_{j=1}^N$ satisfying
\begin{enumerate}[\bf{WP}(i)]
\item\label{cond:1} $B(x_j, r_j) \subset B \setminus E$ for every $j=1, \ldots, N$,
\item\label{cond:2} $r_j \geq \gamma \rho_{d, E}(B)$ for every $j=1, \ldots, N$; here $\rho_{d, E}(B)$, known as the \emph{maximal $E$-free hole function}, plays the role of $\mathcal{M}(P)$ from {\bf{wp}\eqref{cond:MP}}; more precisely, 
$$
\rho_{d, E}(B) := \sup \{0 < s < 2K_d r: \text{such that there exists } y \in X \text{ with } B(y,s) \subset B\setminus E\},
$$
(notice that {\bf{WP}{\bf{\eqref{cond:1}}}} implies $\rho_{d, E}(B) > 0$), 
\item\label{cond:3} $\sum_{j=1}^N \mu(B(x_j, r_j)) \geq \sigma \mu(B)$, and
\item\label{cond:4} $r_j \leq 2K_d r$ for every $j=1, \ldots, N$.
\end{enumerate}
The following doubling property for the maximal $E$-free hole function plays a key role in the characterization of weakly porous sets in a space of homogeneous type $(X, d, \mu)$: given $E \subset X$, the function $\rho_{d, E}$ is called \emph{doubling} if there exists $C_{\rho, E} \geq 1$ such that
\begin{equation}\label{rho:doubling}
\rho_{d, E}(B(x,2r)) \leq C_{\rho, E} \, \rho_{d, E}(B(x,r)), \quad \forall x \in X, r >0. 
\end{equation}
In the Euclidean case, the doubling property of $\mathcal{M}$ as in  {\bf{wp}\eqref{cond:MP}} follows from the weak porosity of $E \subset \rn$ as in \cite[Lemma 3.2]{ALMV}; however, as proved in \cite[Section 8]{Mud25}, that implication fails in general even for connected metric spaces with doubling Borel measures. 

Let $\dist{x, E}:=\inf\{d(x,y): y \in E\}$ for $x \in X$ denote the distance function of $E \subset X$. The equivalence between the statements
\begin{enumerate}[(A)]
\item $E \subset X$ is weakly porous and $\rho_{d, E}$ is doubling, and
\item there exists $\alpha >0$ such that $\dist{\cdot, E}^{-\alpha} \in A_1(X, d, \mu)$, that is,
\begin{equation*}
\sup\limits_{B \, d\text{-ball}} \left( \fint_B \dist{\cdot, E}^{-\alpha} \, d\mu \right) \left(\essinf_B  \dist{\cdot, E}^{-\alpha} \right)^{-1} < \infty,
\end{equation*}
\end{enumerate}
has been proved in \cite[Theorem 1.1]{Mud25} under the hypotheses that $(X, d)$ is a complete metric space with $\mu$ a doubling Borel measure on $X$; and in \cite[Theorem 1.1]{AGG1} under the hypothesis that the $d$-balls in $(X, d)$ are open sets (with respect to the topology they generate) also with $\mu$ a doubling Borel measure on $X$. 

Under the hypotheses that $d$-balls are open sets  plus the additional assumption that the Lebesgue differentiation theorem holds in $(X, d, \mu)$, in Section \ref{sec:thm:A:B} we provide a short, conceptually simple proof of the following theorem that yields the implication (A) $\Rightarrow$ (B).  
 
\begin{thm}\label{thm:main} Let $(X, d, \mu)$ be a space of homogenous type with constants $(K_d, C_\mu)$ where $d$-balls are open sets and the Lebesgue differentiation theorem holds true. If $E \subset X$ is a weakly porous set with some constants $\gamma, \sigma \in (0,1)$ with $\rho_{d, E}$ doubling with constant $C_{\rho, E}$, then there exists $\alpha > 0$ such that $\dist{\cdot, E}^{-\alpha} \in A_1(X, d, \mu)$, that is, 
\begin{equation}\label{dist:E:A1}
[\dist{\cdot, E}^{-\alpha}]_{A_1}:=\sup\limits_{B \, d\text{-ball}} \left( \fint_B \dist{\cdot, E}^{-\alpha} \, d\mu \right) \left(\essinf_B  \dist{\cdot, E}^{-\alpha} \right)^{-1} < \infty,
\end{equation} 
where $\alpha$ and $[\dist{\cdot, E}^{-\alpha}]_{A_1}$ depend only in $K_d$, $C_\mu$, $C_{\rho, E}$, $\gamma$, and $\sigma$. 
\end{thm}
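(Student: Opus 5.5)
The plan is to prove a reverse Hölder inequality for a suitable weight and then apply the Coifman--Rochberg-type construction, but more directly: show that there is $\alpha>0$ with $\fint_B \dist{\cdot,E}^{-\alpha}\,d\mu \lesssim \rho_{d,E}(B)^{-\alpha}$ for every ball $B$, and, separately, that $\essinf_B \dist{\cdot,E}^{-\alpha} \gtrsim \rho_{d,E}(B)^{-\alpha}$.

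\emph{The lower bound on the essential infimum.} Let $B=B(x,r)$. For every $z\in B$ the ball $B(z,\dist{z,E})$ misses $E$. It is not necessarily inside $B$, but by the quasi-triangle inequality a ball $B(z,c\,\dist{z,E})$ with $c\sim 1/K_d$ sits inside $B(x,K_d(1+c')r')$ for a comparable enlargement once $\dist{z,E}\lesssim r$. Hence $\dist{z,E}\lesssim \rho_{d,E}(\lambda B)$ with $\lambda$ depending only on $K_d$. Iterating the doubling property \eqref{rho:doubling} a bounded number of times gives $\dist{z,E}\le C\rho_{d,E}(B)$ for all $z\in B$. The case $\dist{z,E}\gg r$ is handled by noting that $\rho_{d,E}(B)\sim r$ then, since a ball of radius $\sim r$ around $x$ is $E$-free. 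Thus $\dist{\cdot,E}^{-\alpha}\ge C^{-\alpha}\rho_{d,E}(B)^{-\alpha}$ on $B$.

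\emph{The upper bound on the average: a decay estimate on level sets.} I aim to show that there is $\eta<1$ and $\theta\in(0,1)$ such that for every ball $B$,
\[
\mu\bigl(\{z\in B:\dist{z,E}<\theta^k\rho_{d,E}(B)\}\bigr)\le C\eta^k\mu(B),\qquad k\in\na .
\]
Choosing $\alpha$ with $\theta^{-\alpha}\eta<1$ and summing over layers then gives the average bound, and combining it with the lower bound yields \eqref{dist:E:A1}. The decay is obtained by induction on $k$ via a stopping-time/covering argument. Apply \textbf{WP} to $B$ to get disjoint $E$-free balls $B_j\subset B$ with radii $\ge\gamma\rho_{d,E}(B)$ and total measure $\ge\sigma\mu(B)$. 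On the shrunken balls $\tfrac{1}{2K_d}B_j$ (measure comparable by doubling) we have $\dist{\cdot,E}\gtrsim\gamma\rho_{d,E}(B)$. So for small $\theta$, a fixed fraction $\sigma'$ of $\mu(B)$ is excluded from the first level set. To iterate, cover the set $F_1=\{z\in B:\dist{z,E}<\theta\rho_{d,E}(B)\}$ by balls $B'$ of radius comparable to $\theta\rho_{d,E}(B)$ centered at points of $F_1$, using a Vitali-type $5K_d^2$ covering lemma valid in spaces of homogeneous type. For such $B'$, the bound $\rho_{d,E}(B')\lesssim\theta\rho_{d,E}(B)$ follows from the first step, and the doubling of $\rho_{d,E}$ gives the reverse comparability $\rho_{d,E}(B')\gtrsim\theta\rho_{d,E}(B)$ up to constants. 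Applying the first-step estimate in each $B'$ and summing over the bounded-overlap family yields the factor $\eta$ at each stage. This is where the Lebesgue differentiation theorem and openness of balls enter: to ensure the level sets are measurable, and that $F_k$ is, up to a null set, covered by the selected balls. Lebesgue points allow passing from "density bounded away from 1 in every ball" to a genuine measure decay.

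\emph{The main obstacle.} The hard part is the iteration step. One must check that the bounded overlap of the enlarged covering balls, and the loss from doubling constants, do not destroy the gain $1-\sigma'$. The comparability $\rho_{d,E}(B')\sim\theta\rho_{d,E}(B)$ across scales also needs care, since $\rho_{d,E}$ is only doubling, not monotone in a clean way. If the direct covering approach leaks constants, I would instead prove a Gehring-type statement: the weight $w=\dist{\cdot,E}^{-\alpha}$ satisfies, for each ball, $\mu(\{w>\lambda\,\text{ess inf}\})\le(1-\sigma')\mu(B)$ at a fixed threshold. I would then invoke the known fact that such a "weak $A_\infty$ with $A_1$-type lower bound" condition, combined with Lebesgue differentiation via a Calderón--Zygmund decomposition relative to balls, self-improves to the exponential decay above.
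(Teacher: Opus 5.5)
There is a genuine gap, at exactly the step you flag as the main obstacle. There is also a smaller error in your first step.

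\emph{The far-from-$E$ case.} The bound $\dist{z,E}\le C\rho_{d,E}(B)$ on $B=B(x,r)$ is false when $\dist{x,E}\gg r$. By definition $\rho_{d,E}(B)\le 2K_d r$, while $\dist{\cdot,E}\approx\dist{x,E}$ on $B$ can be arbitrarily large compared with $r$. Observing that $\rho_{d,E}(B)\sim r$ there does not rescue the claim: comparing both $\fint_B\dist{\cdot,E}^{-\alpha}$ and $\essinf_B\dist{\cdot,E}^{-\alpha}$ with $\rho_{d,E}(B)^{-\alpha}$ leaves the unbounded ratio $(\dist{x,E}/r)^{\alpha}$. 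The repair is easy: treat $\dist{x,E}\ge Mr$ separately, where $\dist{\cdot,E}$ is comparable to the constant $\dist{x,E}$ on $B$ (Case II in the proof of Theorem \ref{thm:RH:doub}). But it must be done.

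\emph{The iteration.} You cover $F_1$ by balls $B'$ of radius $\sim\theta\rho_{d,E}(B)$ and assert $\rho_{d,E}(B')\gtrsim\theta\rho_{d,E}(B)$ ``by doubling''. Doubling of $\rho_{d,E}$ only compares balls of comparable radii. Passing from $B'$ to a ball of radius $\sim r$ costs $C_{\rho,E}^m$ with $m\approx\log_2\bigl(r/(\theta\rho_{d,E}(B))\bigr)$, and $r/\rho_{d,E}(B)$ is unbounded for weakly porous sets that are not porous.

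The claim is in fact false. Take $E=\{0\}\cup\{\pm 1/k: k\in\na\}\subset\mathbb{R}$; one checks it is weakly porous with $\rho_{d,E}$ doubling. For $B=(-\epsilon,\epsilon)$ one has $\rho_{d,E}(B)\approx\epsilon^2$. Now take $B'=(-\delta,\delta)$ with $\delta\approx\theta\epsilon^2$, centered at $0\in F_1$. On all of $B'$ we have $\dist{\cdot,E}\lesssim\delta^2\ll\theta^2\epsilon^2$, so $B'\subset F_2$ and no gain at all is available on $B'$.

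The scale of each covering ball has to be chosen by a stopping time adapted to $\esssup_{B(z,t)}\dist{\cdot,E}$, so that $\esssup_{B(z,t_z)}\dist{\cdot,E}\approx\theta^k\rho_{d,E}(B)$. This uses doubling of $\sigma_{d,E}$ and $\sigma_{d,E}\approx\rho_{d,E}$ on balls meeting $E$. You would then still need a good-$\lambda$ argument with clean localization (for instance on dyadic cubes), since enlargements $B\mapsto\Lambda B$ compound over the iteration. None of this is in the proposal.

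Your fallback does not close the gap either. The condition you would obtain, that a fixed fraction of each ball has $\dist{\cdot,E}\ge c\,\esssup_B\dist{\cdot,E}$, is equivalent to $\dist{\cdot,E}\in RH_\infty$. Upgrading $RH_\infty$ to $A_\infty$ in a space of homogeneous type is not a known fact without doubling of the weight. That is why Lemma \ref{lemma:RH>Ap} assumes doubling, and it is the same obstruction the paper points to in its closing conjecture.

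This also shows the shortest correct route, which is the paper's:
\begin{itemize}
\item Weak porosity gives $\rho_{d,E}(B)\lesssim\fint_B\dist{\cdot,E}\,d\mu$. This is your first step (Lemma \ref{lem:Ewp:rho:aver:dE}).
\item Near $E$, $\esssup_B\dist{\cdot,E}\lesssim\rho_{d,E}(B)$ (Lemma \ref{lemma:rho:sigma} plus doubling of $\rho_{d,E}$); far from $E$, $\dist{\cdot,E}$ is essentially constant. Together these give that $\dist{\cdot,E}$ is a doubling $RH_\infty$ weight.
\item Lemma \ref{lemma:RH>Ap}, which is where the Lebesgue differentiation theorem enters, then gives $\dist{\cdot,E}\in A_p$.
\item With $\alpha=1/(p-1)$ the conclusion follows from
\[
\Bigl(\fint_B\dist{\cdot,E}^{-\alpha}\,d\mu\Bigr)^{1/\alpha}\le\frac{[\dist{\cdot,E}]_{A_p}[\dist{\cdot,E}]_{RH_\infty}}{\esssup_B\dist{\cdot,E}}.
\]
\end{itemize}
No level-set iteration is needed.
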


\begin{remark} In the aforementioned contexts, when proving the equivalence (A) $\Leftrightarrow$ (B) it is the implication (A) $\Rightarrow$ (B) that appears to be the more technically involved, as can be appreciated in  \cite[Section 5]{ALMV}, \cite[Sections 5 and 6]{Mud25}, and \cite[Section 5]{AGG1}.
\end{remark}

Regarding the implication (B) $\Rightarrow$ (A), in Section \ref{sec:A1:rho:doub} we provide a short proof for the following theorem

\begin{thm}\label{thm:A1:rho:doub} Let $(X, d, \mu)$ be a space of homogeneous type where the $d$-balls are open sets. Let $E \subset X$ be a nonempty set such that $\dist{\cdot, E}^{-\alpha} \in A_1(X, d, \mu)$ for some $\alpha >0$. Then, there exists a constant $C \geq 1$, depending only on $C_\mu$, $K_d$, $\alpha$, and $[\dist{\cdot, E}^{-\alpha}]_{A_1}$, such that
\begin{equation}\label{doub:rho}
\rho_{d, E}(B) \leq C \rho_{d, E}(\tfrac{1}{2}B), \quad \forall d\text{-ball } B. 
\end{equation}
\end{thm}

\section{Preliminaries and remarks}

Our proof of Theorem \ref{thm:main} hinges upon the following result that extends the well-known fact that weights in $\rn$ satisfying a reverse-H\"older inequality are $A_\infty(\rn)$ Muckenhoupt weights; namely:

\begin{lem}\label{lemma:RH>Ap} Let $(X, d, \mu)$ be a space of homogenous type with constants $(K_d, C_\mu)$ where the $d$-balls are open sets and the Lebesgue differentiation theorem holds true. Given a weight $w$ in $(X, d, \mu)$ (i.e., $w \in L^1_{\rm{loc}}(X, d, \mu)$ with $w \geq 0$ $\mu$-a.e. in $X$) such that:
\begin{enumerate}[(i)]
\item $w$ is doubling, that is, there exists $C_w \geq 1$ such that $w(B(x, 2r)) \leq C_w w(B(x,r))$ for every $x \in X$ and $r > 0$, and
\item $w$ satisfies a reverse-H\"older inequality for some $q > 1$, that is, 
\begin{equation}\label{w:RHq}
[w]_{RH_q}:= \sup\limits_{B \, d\text{-ball}} \left(\fint_B w^{q} \, d\mu \right)^{1/q}  \left(\fint_B w \, d\mu \right)^{-1} < \infty,
\end{equation}
\end{enumerate}
then there exists $p > 1$ (depending only on $K_d$, $C_\mu$, $C_w$, $q$ and $[w]_{RH_q}$) such that $w \in A_p(X, d, \mu)$, meaning
$$
[w]_{A_p}:= \sup\limits_{B \, d\text{-ball}} \left(\fint_B w  \, d\mu \right) \left(\fint_B w^{-1/(p-1)} \, d\mu \right)^{p-1} < \infty. 
$$
\end{lem}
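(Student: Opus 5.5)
Our plan is the classical route: reverse H\"older $\Rightarrow$ $A_\infty$-type condition $\Rightarrow$ $A_p$, using only the doubling of $w$ and the measure-theoretic tools available in $(X,d,\mu)$.

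\emph{Step 1: reverse H\"older gives control of $w$ by $\mu$.} Fix a $d$-ball $B$ and a measurable $F\subset B$. H\"older's inequality with exponents $q,q'$ together with \eqref{w:RHq} gives
$$
\frac{w(F)}{w(B)} \leq \frac{1}{w(B)}\Big(\int_B w^q\,d\mu\Big)^{1/q}\mu(F)^{1/q'} \leq [w]_{RH_q}\Big(\frac{\mu(F)}{\mu(B)}\Big)^{1/q'} .
$$
Taking $F=\{x\in B: w(x)\le \lambda \fint_B w\,d\mu\}$ complements, we also obtain the dual statement: there are $\eta,\beta\in(0,1)$ such that $\mu(F)\geq \eta\,\mu(B)$ implies $w(F)\geq\beta\, w(B)$ (apply the display to $B\setminus F$).

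\emph{Step 2: a lower bound for $w$ on large sets.} From Step 1, the set $\{x\in B: w(x)\leq \lambda \fint_B w\}$ has small $w$-measure, namely at most $\lambda\mu(\cdot)\fint_B w\le\lambda w(B)$. Combined with the dual statement, choosing $\lambda$ small gives $\mu(\{x\in B: w(x)\le\lambda\fint_B w\})\le (1-\eta)\mu(B)$, a uniform version of the $A_\infty$ condition for $w$ relative to $\mu$. The goal is then to upgrade this to a ``reverse'' statement: $\mu(F)/\mu(B)\le C (w(F)/w(B))^{\delta}$ for some $\delta>0$. This is where the hypothesis that $w$ is doubling and the Lebesgue differentiation theorem enter: run a Calder\'on--Zygmund decomposition of $\chi_F$ (or of $w$) on $B$ with respect to the doubling measure $w\,d\mu$. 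We use dyadic-type cubes (Christ cubes, whose existence needs only a quasi-metric with open balls and a doubling measure), or alternatively a Vitali covering lemma for $d$-balls, to iterate the ``$\mu$-proportion $\Rightarrow$ $w$-proportion'' statement and obtain geometric decay: $\mu(\{w<\lambda^k\fint_B w\}\cap B)\le (1-\eta)^k\mu(B)$. Lebesgue differentiation is used to ensure a.e.\ point of a level set is a density point so that the stopping-time cubes cover it up to a null set. Doubling of both $w$ and $\mu$ controls the parents of stopping cubes.

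\emph{Step 3: conclusion.} The geometric decay makes $w^{-\varepsilon}$ integrable with
$$
\fint_B w^{-\varepsilon}\,d\mu\le C\Big(\fint_B w\,d\mu\Big)^{-\varepsilon}
$$
for small $\varepsilon>0$ depending on $\eta,\lambda$, by the layer-cake formula. Setting $\varepsilon=1/(p-1)$ yields $[w]_{A_p}<\infty$ with constants depending only on $K_d,C_\mu,C_w,q,[w]_{RH_q}$.

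The main obstacle is Step 2, the iteration. One must check that the stopping-time decomposition is compatible with balls, since the hypotheses are stated for balls and not cubes: cubes are sandwiched between comparable balls, which costs doubling constants. A cleaner alternative we would try first is to exploit the symmetry of $A_\infty$: show $w\in A_\infty(\mu)$ implies $\mu\in A_\infty(w\,d\mu)$, then apply the reverse H\"older inequality in the space $(X,d,w\,d\mu)$, which is of homogeneous type by (i), to $\mu$'s density $w^{-1}$. This gives $\fint_B w^{-1-\varepsilon}w\,d\mu\lesssim(\fint_B w^{-1}w\,d\mu)^{1+\varepsilon}$, i.e.\ exactly the $A_p$ bound with $p=1+1/\varepsilon$.
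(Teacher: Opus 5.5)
This is essentially the argument the paper relies on. The paper does not prove Lemma~\ref{lemma:RH>Ap} itself but defers to \cite[p.~3391]{IMS13} and \cite[Chapter~1]{ST89}, where one passes from $RH_q$ to an $A_\infty$ condition by H\"older's inequality as in your Step~1, uses the symmetry of $A_\infty$ between $\mu$ and $w\,d\mu$, and reads off $w\in A_p$ from the reverse H\"older inequality for $w^{-1}$ in $(X,d,w\,d\mu)$, as in your closing paragraph. The one point to handle with care in Step~2 is the passage to Christ cubes: sandwich the power estimate $w(F)/w(B)\le [w]_{RH_q}(\mu(F)/\mu(B))^{1/q'}$, which loses only a factor depending on $C_w$ and $C_\mu$, and then derive the one-level dual statement afresh on cubes, since the one-level statement itself (with $\eta$ close to $1$) does not survive the doubling constants; also, your bound there should read $\mu(\{x\in B: w(x)\le\lambda\fint_B w\,d\mu\})<\eta\,\mu(B)$ rather than $(1-\eta)\mu(B)$.
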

\begin{remark}\label{rmk:idea:proof} The idea of the proof for (A) $\Rightarrow$ (B) is then rather transparent: we will show that (A) implies that $\dist{\cdot, E}$ is a doubling weight in $(X, d, \mu)$ that satisfies the reverse-H\"older inequality
\begin{equation}\label{def:RHinf}
[\dist{\cdot, E}]_{RH_\infty}:= \sup\limits_{B \, d\text{-ball}} \left(\esssup\limits_B  \dist{\cdot, E} \right)  \left(\fint_B \dist{\cdot, E} \, d\mu \right)^{-1} < \infty,
\end{equation}
thus \eqref{w:RHq} holds for any fixed $q > 1$. By  Lemma \ref{lemma:RH>Ap} there is $p> 1$ such that
$$
[\dist{\cdot, E}]_{A_p}:= \sup\limits_{B \, d\text{-ball}} \left(\fint_B \dist{\cdot, E} \, d\mu \right) \left(\fint_B \dist{\cdot, E}^{-1/(p-1)} \, d\mu \right)^{q-1} < \infty,
$$
and, by taking $\alpha:=1/(p-1) >0$, given a $d$-ball $B$ we have
\begin{align}\label{RH:Ap:A1}
\left(\fint_B \dist{\cdot, E}^{-\alpha} \, d\mu  \right)^{1/\alpha} & \leq \frac{[\dist{\cdot, E}]_{A_p}}{\left(\fint_B \dist{\cdot, E} \, d\mu \right)} \leq \frac{[\dist{\cdot, E}]_{A_p} [\dist{\cdot, E}]_{RH_\infty}}{\esssup\limits_B \dist{\cdot, E}}\\\nonumber
& = [\dist{\cdot, E}]_{A_p} [\dist{\cdot, E}]_{RH_\infty} \essinf\limits_B \dist{\cdot, E}^{-1}, 
\end{align}
which leads to \eqref{dist:E:A1} with $[\dist{\cdot, E}^{-\alpha}]_{A_1} \leq ([\dist{\cdot, E}]_{A_p} [\dist{\cdot, E}]_{RH_\infty})^\alpha$. 
\end{remark}

\begin{remark} A proof of Lemma \ref{lemma:RH>Ap} can be found, for instance, in \cite[p.3391]{IMS13} as well as in \cite[Chapter 1]{ST89}. Notice the misprint in the condition (4) on \cite[p.7]{ST89},  where the inequality ``$w_{S, \mu} \leq C \nu(S)/\mu(S)$'' should read ``$w_{S, \nu} \leq C \nu(S)/\mu(S)$.'' 
\end{remark}

\begin{remark}\label{rmk:LDT} The Lebesgue differentiation theorem is said to hold in a space homogeneous type $(X, d, \mu)$ if for every $f \in L^1_{\rm{loc}}(X, d, \mu)$ we have
$$
\lim\limits_{r \to 0^+} \fint_{B(x, r)} |f(y) - f(x)| \, d\mu(y) = 0, \quad \text{for } \mu\text{-a.e. } x \in X.
$$ 
By \cite[Theorem 3.14]{AlvaMitrea15}, the Lebesgue differentiation theorem holds in  $(X, d, \mu)$ if and only if there exists $p \in (0, \infty)$ such that the class of continuous functions with bounded support is dense in $L^{p}(X, d, \mu)$ if and only if $\mu$ is Borel-semiregular on $(X,d)$. If $Borel(X, d)$ denotes the smallest $\sigma$-algebra containing the open sets of $(X,d)$, then a measure $\mu$ defined on a $\sigma$-algebra $\Sigma$ of $X$ is called \emph{Borel-semiregular on} $(X,d)$ if $Borel(X, d) \subset \Sigma$ and for every $E \in \Sigma$ with $\mu(E) < \infty$ there exists $F \in Borel(X, d)$ such that $\mu((E \setminus F) \cup (F \setminus E)) =0$, see \cite[Section 3.3]{AlvaMitrea15}.
\end{remark}

\begin{remark}\label{rmk:mu:Ebar:0} Given a set $E \subset X$, the assumption on the openness of the $d$-balls in $(X, d, \mu)$ makes the function $x \mapsto \dist{x, E}$ $\mu$-measurable as well as $\overline{E} = \{x \in X: \dist{x,E}=0\}$, see \cite[Proposition 2.2]{AGG1}. In particular, $E$ is weakly porous if and only if $\overline{E}$ is weakly porous, both with the same constants $\gamma, \sigma \in (0,1)$. Also, the Lebesgue differentiation theorem implies that $\mu(\overline{E}) = 0$ if $\overline{E}$ is weakly porous. Indeed, if for some $x \in \overline{E}$ we had
\begin{equation}\label{density:E:x}
\lim\limits_{r \to 0^+} \frac{\mu(\overline{E} \cap B(x,r))}{\mu(B(x,r))} = 1,
\end{equation}
then by fixing $r > 0$ and letting $\{B(x_j, r_j)\}_{j=1}^N$ be the $d$-balls from the definition of weak porosity of $\overline{E}$ applied to $B(x,r)$ we would get
$$
\frac{\mu(\overline{E} \cap B(x,r))}{\mu(B(x,r))} = 1 - \frac{\mu(B(x,r) \setminus \overline{E})}{\mu(B(x,r))} \leq 1 - \frac{\mu(\cup_{j=1}^{N}B(x_j, r_j))}{\mu(B(x,r))} \leq 1 - \sigma,
$$
contradicting \eqref{density:E:x}. 
\end{remark}

\begin{remark} As mentioned, in the work \cite{AGG1} the only hypothesis assumed on the space of homogeneous type $(X, d, \mu)$ for the equivalence  (A) $\Leftrightarrow$ (B) is the openness of the $d$-balls. Thus, our present short proof of the implication  (A) $\Rightarrow$ (B) comes at the expense of the additional hypothesis on the validity of Lebesgue's differentiation theorem. On the other hand, these two hypotheses are satisfied in the contexts of \cite{ALMV} (Euclidean space with the Lebegue measure) and \cite{Mud25} (complete metric spaces with a doubling Borel regular measure). 
\end{remark}

Our proof of Theorem \ref{thm:A1:rho:doub} will be based on the following basic facts on Muckenhoupt weights. 

\begin{lem}\label{lem:basic:Ainf} Let $(X, d, \mu)$ be a space of homogeneous type. Then,
\begin{enumerate}[(i)]
\item\label{lem:basic:Ainf:1} if $w \in A_1(X, d, \mu)$, that is,
\begin{equation}\label{w:A1}
[w]_{A_1}:=\sup\limits_{B \, d\text{-ball}} \left( \fint_B w \, d\mu \right) \left(\essinf_B  w \right)^{-1} < \infty,
\end{equation}
and if $w^{-1} \in L^1_{\rm{loc}}(X, d, \mu)$, then $w^{-1}$ is a doubling weight with constant  $C_\mu^2 [w]_{A_1}$ that satisfies the $RH_\infty$ reverse-H\"older inequality
\begin{equation}\label{w:inv:RH:inft}
\esssup_B w^{-1} \leq [w]_{A_1} \left( \fint_B w^{-1} \, d\mu \right), \quad  \forall d\text{-ball } B;
\end{equation}
\item if $u \in RH_\infty(X, d, \mu)$, that is, 
$$
[u]_{RH_\infty}:= \sup\limits_{B \, d\text{-ball}} \left(\esssup\limits_B  u \right)  \left(\fint_B u \, d\mu \right)^{-1} < \infty,
$$
then given any $d$-ball $B$ and any $\mu$-measurable set $F \subset B$ with $\mu(F) >0$, we have
\begin{equation}\label{fint:F:fint:B:u}
\fint_F u \, d\mu \leq [u]_{RH_\infty} \fint_B u \, d\mu. 
\end{equation}
\end{enumerate}

\end{lem}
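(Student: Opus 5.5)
Both parts follow directly from the $A_1$ condition and the doubling of $\mu$. For part (i) we first record the pointwise consequence of \eqref{w:A1}. For every $d$-ball $B$ we have $\essinf_B w \geq [w]_{A_1}^{-1}\fint_B w\,d\mu$, hence
\[
\esssup_B w^{-1} = \Bigl(\essinf_B w\Bigr)^{-1} \leq [w]_{A_1}\Bigl(\fint_B w\,d\mu\Bigr)^{-1}.
\]
Jensen's inequality (equivalently Cauchy--Schwarz, or the harmonic--arithmetic mean inequality) applied to the convex function $t\mapsto t^{-1}$ gives $(\fint_B w\,d\mu)^{-1}\leq \fint_B w^{-1}\,d\mu$. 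Combining the two yields \eqref{w:inv:RH:inft}. Here $w^{-1}\in L^1_{\rm loc}$ ensures that the averages are finite. If $\essinf_B w=0$, the $A_1$ condition forces $\fint_B w\,d\mu=0$, so $w=0$ a.e. on $B$, which contradicts $w^{-1}\in L^1_{\rm loc}$; thus this degenerate case does not occur.

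To show that $w^{-1}$ is doubling, fix $B=B(x,r)$ and $2B=B(x,2r)$. By \eqref{w:inv:RH:inft} on $2B$,
\[
\int_{2B} w^{-1}\,d\mu \leq \mu(2B)\,\esssup_{2B} w^{-1} \leq [w]_{A_1}\,\frac{\mu(2B)}{\mu(2B)}\int_{2B}w^{-1}\,d\mu ,
\]
but this is circular, so a lower bound on $\int_B w^{-1}$ is needed instead. We use the $A_1$ condition on $2B$ directly:
\[
\esssup_{2B} w^{-1} \leq [w]_{A_1}\Bigl(\fint_{2B} w\,d\mu\Bigr)^{-1}\leq [w]_{A_1}\frac{\mu(2B)}{\mu(B)}\Bigl(\fint_B w\,d\mu\Bigr)^{-1}\leq C_\mu[w]_{A_1}\fint_B w^{-1}\,d\mu,
\]
where the middle step uses $\int_{2B}w\geq\int_B w$ and the last uses Jensen on $B$ together with $\mu(2B)\leq C_\mu\mu(B)$. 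Then
\[
\int_{2B}w^{-1}\,d\mu\leq \mu(2B)\esssup_{2B}w^{-1}\leq C_\mu\mu(B)\cdot C_\mu[w]_{A_1}\fint_Bw^{-1}\,d\mu = C_\mu^2[w]_{A_1}\int_Bw^{-1}\,d\mu,
\]
which gives the doubling constant $C_\mu^2[w]_{A_1}$. Positivity of $\int_B w^{-1}\,d\mu$ holds since $w<\infty$ a.e. on $B$ (as $w\in L^1_{\rm loc}$) and $\mu(B)>0$.

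Part (ii) is immediate. Since $F\subset B$, we have $\fint_F u\,d\mu\leq \esssup_F u\leq\esssup_B u\leq [u]_{RH_\infty}\fint_B u\,d\mu$, where the first inequality uses $\mu(F)>0$ and the last is the definition of $[u]_{RH_\infty}$.

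The only delicate point is the bookkeeping in the doubling estimate. One must pass through $\fint_{2B} w$ rather than $\fint_{2B} w^{-1}$, which avoids circularity and produces exactly the factor $C_\mu^2$. One must also check the nondegeneracy issues: $\essinf_B w>0$ and $0<\int_B w^{-1}\,d\mu<\infty$.
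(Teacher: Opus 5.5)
Your proof is correct and matches the paper's argument step for step. You get the $RH_\infty$ bound from $\esssup_B w^{-1} \le [w]_{A_1}(\fint_B w\,d\mu)^{-1}$ and the harmonic--arithmetic mean inequality, doubling from $\fint_{2B} w\,d\mu \ge \mu(B)\mu(2B)^{-1}\fint_B w\,d\mu$, and part (ii) from $\esssup_F u \le \esssup_B u$. Your explicit check of the degenerate cases is a small addition the paper leaves implicit, and you should delete the abandoned circular computation from a final write-up.
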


\begin{proof}  Given a $d$-ball $B$, from \eqref{w:A1} we get
\begin{align*}
\esssup_B w^{-1} \leq [w]_{A_1} \left( \fint_B w \, d\mu \right)^{-1} \leq  [w]_{A_1} \left( \fint_B w^{-1} \, d\mu \right),
\end{align*}
where the last inequality is just the harmonic-mean arithmetic-mean inequality, and \eqref{w:inv:RH:inft} follows whenever $w^{-1} \in L^1_{\rm{loc}}(X, d, \mu)$. Now, given a $d$-ball $B$,
\begin{align*}
\fint_{2B} w^{-1} \, d\mu \leq \esssup_{2B} w^{-1} \leq [w]_{A_1}  \left( \fint_{2B} w \, d\mu \right)^{-1} & \leq C_\mu [w]_{A_1}\left( \fint_{B} w \, d\mu \right)^{-1} \\ 
& \leq C_\mu [w]_{A_1}  \left( \fint_B w^{-1} \, d\mu \right),
\end{align*}
which implies that $w^{-1}$ is doubling with constant  $C_\mu^2 [w]_{A_1}$. Next,  if $u \in RH_\infty(X, d, \mu)$, given a $d$-ball $B$ and a $\mu$-measurable set $F \subset B$ with $\mu(F) >0$, we write
\begin{align*}
u(F) = \int_F u \, d\mu \leq (\esssup_F u) \mu(F)  \leq (\esssup_B u) \mu(F) \leq  [u]_{RH_\infty} \frac{u(B)}{\mu(B)} \mu(F)
\end{align*}
and  \eqref{fint:F:fint:B:u} follows. \end{proof}

Given a nonempty set $E \subset X$ and a $d$-ball $B$, let us define
$$
\sigma_{d,E}(B):=\esssup\limits_B \dist{\cdot, E}
$$
and $\sigma_{d,E}$ is called \emph{doubling} if there is $C \geq 1$ with 
$$
\sigma_{d,E}(2B) \leq C \sigma_{d,E}(B), \quad \forall d\text{-ball } B.
$$
\begin{remark}\label{rmk:sigma:doubling} Notice that if $\mu(\overline{E}) =0$, then $B \setminus \overline{E}$ is a nonempty open set and then $\sigma_{d,E}(B)>0$ (as well as $\rho_{d,E}(B) >0$). Also, if $\dist{\cdot, E} \in RH_\infty(X, d, \mu)$ with $\dist{\cdot, E}$ doubling, then $\sigma_{d,E}$ is also doubling, since $\dist{\cdot, E}(B)/\mu(B) \simeq  \sigma_{d,E}(B)$ for every $B$ whenever $\dist{\cdot, E} \in RH_\infty(X, d, \mu)$. 

\end{remark}

The next lemma is inspired by \cite[Lemma 3.4]{AGG1} and relates $\sigma_{d,E}$ and $\rho_{d,E}$.

\begin{lem}\label{lemma:rho:sigma} Let $(X, d, \mu)$ be a space of homogeneous type with constants $(K_d, C_\mu)$ where the $d$-balls are open sets. Let $E \subset X$ be a nonempty set with $\mu(\overline{E}) =0$. Then, 
\begin{enumerate}[(i)]
\item\label{item:dzE>tau} given a $d$-ball $B:=B(x,r)$, $C > K_d$, and a $d$-ball $B(y, \tau) \subset B \setminus E$, we have
\begin{equation}\label{dzE:tau}
\dist{z, E} > \left(\frac{1}{K_d} - \frac{1}{C}\right) \tau, \quad \forall z \in B(y, \tau/C),
\end{equation}

\item\label{item:rho<sigma} $\rho_{d,E}(B) \leq 2K_d \, \sigma_{d, E}(B)$ for every $d$-ball $B=B(x,r)$, 

\item\label{item:sigma<r} $\sigma_{d,E}(B) < 2K_d \, r$ for every $d$-ball $B=B(x,r)$ with  $B \cap E \ne \emptyset$, 

\item\label{item:sigma<rho:sigma} $\sigma_{d,E}(B) \leq \rho_{d, E}(B(x, K_d(\sigma_{d,E}(B) + r)))$ for every $d$-ball $B=B(x,r)$ with  $B \cap E \ne \emptyset$, which together with \eqref{item:sigma<r} implies
\begin{equation}\label{dist:<:rho:Kd}
\sigma_{d,E}(B) \leq \rho_{d, E}(B(x, K_d(2K_d  + 1) r)).
\end{equation} 
\end{enumerate}

\end{lem}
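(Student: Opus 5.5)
I plan to prove the four items in order, using only the quasi-triangle inequality, the openness of $d$-balls, and $\mu(\overline{E})=0$ (which guarantees, via Remark \ref{rmk:sigma:doubling}, that $B\setminus\overline{E}$ is a nonempty open set of positive measure).

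For \eqref{item:dzE>tau}, fix $z\in B(y,\tau/C)$ and $e\in E$. Since $B(y,\tau)\cap E=\emptyset$, we have $d(y,e)\ge\tau$. The quasi-triangle inequality gives $\tau\le d(y,e)\le K_d(d(y,z)+d(z,e))< K_d(\tau/C+d(z,e))$. Hence $d(z,e)>(1/K_d-1/C)\tau$. Taking the infimum over $e$ gives only $\ge$. To get the strict inequality I will use the slack coming from $d(y,z)<\tau/C$ strictly: with $\delta:=\tau/C-d(y,z)>0$ one gets $d(z,e)\ge \tau/K_d-\tau/C+\delta$ for all $e$, which survives the infimum.

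For \eqref{item:rho<sigma}, take any admissible $s$, i.e.\ $B(y,s)\subset B\setminus E$ with $s<2K_dr$. Applying \eqref{item:dzE>tau} with, say, $C=2K_d$ shows that $\dist{\cdot,E}>s/(2K_d)$ on $B(y,s/(2K_d))$. This ball is open and contains $y$, so it is nonempty. I also need it to have positive measure; this holds because $\mu$ of any ball is positive by \eqref{def:Cmu}. Therefore $\sigma_{d,E}(B)\ge s/(2K_d)$, and taking the supremum over $s$ gives $\rho_{d,E}(B)\le 2K_d\sigma_{d,E}(B)$.

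For \eqref{item:sigma<r}, pick $e\in B\cap E$. For every $z\in B$, $\dist{z,E}\le d(z,e)\le K_d(d(z,x)+d(x,e))$. Strictness is the delicate point: $\mathrm{ess\,sup}$ of a pointwise strict bound $<2K_dr$ only gives $\le 2K_dr$. I will recover strictness by using the fixed $e$: we have $d(x,e)=:t<r$, so $\dist{z,E}\le K_d(r+t)<2K_dr$ for all $z\in B$. This bound is uniform, so the essential supremum is at most $K_d(r+t)<2K_dr$.

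For \eqref{item:sigma<rho:sigma}, which I expect to be the main obstacle because of the sup/essup bookkeeping, set $\sigma:=\sigma_{d,E}(B)>0$ and $R:=K_d(\sigma+r)$. For any $0<s<\sigma$, the set $\{z\in B:\dist{z,E}>s\}$ has positive measure, so it contains some point $z$. Then $B(z,s)\cap E=\emptyset$. Moreover, for $w\in B(z,s)$ we have $d(x,w)\le K_d(d(x,z)+d(z,w))<K_d(r+s)<R$, so $B(z,s)\subset B(x,R)\setminus E$. The admissibility condition $s<2K_dR$ is clear. Hence $\rho_{d,E}(B(x,R))\ge s$, and letting $s\uparrow\sigma$ gives the claim. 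Finally, \eqref{dist:<:rho:Kd} follows by inserting $\sigma<2K_dr$ from \eqref{item:sigma<r} and using that $\rho_{d,E}$ is monotone under enlarging the ball. Monotonicity needs care, since the cap $2K_dR$ also grows with $R$, but a larger ball with a larger cap admits more holes, so $\rho_{d,E}$ is nondecreasing.
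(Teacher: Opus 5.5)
Your proof is correct and follows the paper's argument item by item: the quasi-triangle inequality for (i), the choice $C=2K_d$ together with positivity of $\mu$ on balls for (ii), a fixed point $e\in B\cap E$ for (iii), and, for (iv), a point of $B$ where $\dist{\cdot,E}$ nearly attains its essential supremum, whose $E$-free ball is placed inside an enlarged concentric ball. Your slack arguments in (i) and (iii) actually secure the strict inequalities, which the paper's version (passing an infimum or supremum through pointwise strict bounds) only yields as non-strict, and your explicit monotonicity of $\rho_{d,E}$ in the radius makes precise a step the paper uses tacitly in (iv) and in \eqref{dist:<:rho:Kd}.
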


\begin{proof} \eqref{item:dzE>tau}: since $\dist{y, E} \geq \tau$ (due to $B(y, \tau) \subset B \setminus E$), for $e \in E$ and $z \in B(y, \tau/C)$ we write
\begin{align*}
\tau \leq \dist{y, E} \leq d(y,e) \leq K_d d(y, z) + K_d d(z,e) < \frac{K_d}{C} \tau + K_d d(z,e)
\end{align*}
and \eqref{dzE:tau} follows. \eqref{item:rho<sigma}: given $B=B(x,r)$ and $B(y, \tau) \subset B \setminus E$, \eqref{dzE:tau} with $C:=2K_d$ gives
$$
\dist{z, E} \geq \frac{\tau}{2K_d}, \quad \forall z \in B(y, \tau/(2K_d)),
$$
so that $\tau \leq 2K_d \sigma_{d,E} (B(y, \tau/(2K_d))) \leq 2K_d \sigma_{d,E} (B)$ and, by taking supremum in $\tau$, $\rho_{d,E}(B) \leq 2K_d \, \sigma_{d, E}(B)$.  \eqref{item:sigma<r}: given $B=B(x,r)$ with $B \cap E \ne \emptyset$, fix $e \in B \cap E$ so that for every  $z \in B(x,r)$ we have $\dist{z, E} \leq d(z, e) \leq K_d (d(x,z) + d(x, e)) < 2K_d r$ and, by taking supremum in $z$,  $\sigma_{d,E}(B) \leq 2K_d \, r$. \eqref{item:sigma<rho:sigma}: given $B=B(x,r)$ with $B \cap E \ne \emptyset$ and $0 < \eps < \sigma_{d,E}(B)$ let $y \in B$ such that $\dist{y, E} \geq  \sigma_{d,E}(B) - \eps$; in particular, $B(y, \sigma_{d,E}(B) - \eps) \cap E = \emptyset$. Also, since $y \in B(x,r)$, 
$$
B(y, \sigma_{d,E}(B) - \eps) \subset B(x, K_d(\sigma_{d,E}(B) - \eps + r)),
$$
and by \eqref{item:sigma<r}, $\sigma_{d,E}(B) < 2K_d \, r$; so that $0<\sigma_{d,E}(B) - \eps < 2K_d r$. Then the definition of $\rho_{d,E}$ yields
$$
\sigma_{d,E}(B) - \eps \leq \rho_{d,E}(B(x, K_d(\sigma_{d,E}(B) - \eps + r))),
$$
and we obtain \eqref{item:sigma<rho:sigma} by letting $\eps \to 0$. 
\end{proof}

\section{A short proof of Theorem \ref{thm:main} (weak porosity implies $A_1$)}\label{sec:thm:A:B}

\begin{lem}\label{lem:Ewp:rho:aver:dE} Let $(X, d, \mu)$ be a space of homogeneous type with constants $(K_d, C_\mu)$ where the $d$-balls are open sets and let $E \subset X$ be a weakly porous set with constants $\gamma, \sigma \in (0,1)$. Then,  there exists $C_1 \geq 1$, depending only on $K_d$ and $C_\mu$, such that
\begin{equation}\label{rho:average:d}
\rho_{d, E}(B) \leq \frac{C_1}{\gamma \sigma} \fint_B \dist{\cdot, E} \, d\mu, \quad \forall d\text{-ball } B.
\end{equation}
\end{lem}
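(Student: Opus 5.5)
Fix a $d$-ball $B=B(x,r)$ and apply the weak porosity of $E$ to $B$. This gives pairwise disjoint balls $\{B(x_j,r_j)\}_{j=1}^N$ with $B(x_j,r_j)\subset B\setminus E$, radii $r_j\geq \gamma\rho_{d,E}(B)$, total measure $\sum_j \mu(B(x_j,r_j))\geq \sigma\mu(B)$, and $r_j\le 2K_d r$. The plan is to bound $\dist{\cdot,E}$ from below on a fixed proportion of each hole, and then sum over the disjoint holes.

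\emph{Lower bound on a shrunken hole.} Apply Lemma~\ref{lemma:rho:sigma}\eqref{item:dzE>tau} with $C:=2K_d$ and $\tau:=r_j$. This gives
$$
\dist{z,E}>\frac{r_j}{2K_d}\geq \frac{\gamma}{2K_d}\,\rho_{d,E}(B), \qquad \forall z\in B(x_j, r_j/(2K_d)).
$$

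\emph{Measure comparison.} We need $\mu(B(x_j,r_j/(2K_d)))\geq c\,\mu(B(x_j,r_j))$ with $c$ depending only on $K_d$ and $C_\mu$. This follows from the doubling condition \eqref{def:Cmu}: choose $m\in\na$ with $2^m\geq 2K_d$ and iterate \eqref{def:Cmu} $m$ times. This gives $c=C_\mu^{-m}$, where $m\approx \log_2(2K_d)$.

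\emph{Integration.} Since the shrunken balls lie in $B$ and are pairwise disjoint, and $\dist{\cdot,E}$ is $\mu$-measurable by Remark~\ref{rmk:mu:Ebar:0},
$$
\int_B \dist{\cdot,E}\,d\mu\geq \sum_{j=1}^N \int_{B(x_j,r_j/(2K_d))}\dist{\cdot,E}\,d\mu \geq \frac{\gamma\rho_{d,E}(B)}{2K_d}\,C_\mu^{-m}\sum_{j=1}^N\mu(B(x_j,r_j)).
$$
By the total-measure condition, the last sum is at least $\sigma\mu(B)$. Dividing by $\mu(B)$ gives \eqref{rho:average:d} with $C_1:=2K_dC_\mu^{m}$, which depends only on $K_d$ and $C_\mu$.

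There is no real obstacle here. The only points needing care are the measurability of $\dist{\cdot,E}$, already handled by Remark~\ref{rmk:mu:Ebar:0}, and making sure the shrinking constant is uniform so that the doubling iteration count does not depend on $j$.
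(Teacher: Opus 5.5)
Your proposal is correct and follows the paper's proof essentially step for step. Both arguments shrink each hole by $2K_d$ via \eqref{dzE:tau}, compare $\mu(B(x_j,r_j/(2K_d)))\ge c_1\,\mu(B(x_j,r_j))$ by doubling, and sum over the disjoint holes using {\bf WP(iii)}, so your explicit constant $C_1=2K_dC_\mu^{m}$ is exactly the paper's $2K_d/c_1$ with $c_1=C_\mu^{-m}$.
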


\begin{proof} Given the pairwise disjoint balls $\{B(x_j, r_j)\}_{j=1}^N$ associated to $B$ from the definition of weak porosity and by taking $C:=2K_d$ in \eqref{dzE:tau} the conditions {\bf{WP}{\bf{\eqref{cond:1}}}}  and {\bf{WP}{\bf{\eqref{cond:2}}}} imply
\begin{equation*}
\dist{z, E} \geq \frac{r_j}{2K_d} \geq \frac{\gamma}{2K_d} \rho_{d, E}(B), \quad \forall z \in B(x_j, r_j/(2K_d)), j=1, \ldots, N.
\end{equation*}
Hence, 
\begin{align*}
& \int_B \dist{\cdot, E} \, d\mu \geq \int_{\cup_{j=1}^N B(x_j, r_j)} \dist{\cdot, E} \, d\mu = \sum\limits_{j=1}^N \int_{B(x_j, r_j)} \dist{\cdot, E} \, d\mu\\
& \geq \sum\limits_{j=1}^N \int_{B(x_j, r_j/(2 K_d))} \dist{\cdot, E} \, d\mu  \geq  \frac{\gamma}{2K_d} \rho_{d, E}(B) \sum\limits_{j=1}^N \mu(B(x_j, r_j/(2 K_d)) )\\
& \geq  \frac{\gamma c_1}{2K_d} \rho_{d, E}(B) \sum\limits_{j=1}^N \mu(B(x_j, r_j) )  \geq \frac{\gamma \sigma c_1}{2K_d} \rho_{d, E}(B) \mu(B) =: \frac{\gamma \sigma}{C_1} \rho_{d, E}(B) \mu(B),
\end{align*}
where for the last inequality we used {\bf{WP}{\bf{\eqref{cond:3}}}} and here $c_1 \in (0,1)$, depending only on $K_d$ and $C_\mu$, satisfies $\mu(B(x, R)) \geq c_1 \mu(B(x, 2K_d R)) $ for every $x \in X$ and  $R >0$, and \eqref{rho:average:d} follows. \end{proof}

Theorem \ref{thm:main} will follow as in Remark \ref{rmk:idea:proof} from Lemma \ref{lemma:RH>Ap} together with the next result.

\begin{thm}\label{thm:RH:doub} Let $(X, d, \mu)$ be a space of homogenous type with constants $(K_d, C_\mu)$ where the $d$-balls are open sets. Let $E \subset X$ be a weakly porous set with constants $\gamma, \sigma \in (0,1)$ such that $\mu(\overline{E})=0$ and  $\rho_{d, E}$ is doubling with constant $C_{\rho, E}$. Then, $\dist{\cdot, E}$ is a doubling weight in $(X, d, \mu)$ that belongs to $RH_\infty(X, d, \mu)$ (that is, $\dist{\cdot, E}$ satisfies \eqref{def:RHinf}), with constants depending only on $K_d$, $C_\mu$, $C_{\rho, E}$, $\gamma$, and $\sigma$.  
\end{thm}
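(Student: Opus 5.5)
Our plan is to prove the $RH_\infty$ inequality first and then deduce the doubling property from it. Fix a $d$-ball $B=B(x,r)$. The key quantity is $\sigma_{d,E}(B)=\esssup_B \dist{\cdot,E}$, and we must show
\[
\sigma_{d,E}(B)\lesssim \fint_B \dist{\cdot,E}\,d\mu .
\]
Lemma \ref{lem:Ewp:rho:aver:dE} already gives $\rho_{d,E}(B)\le \frac{C_1}{\gamma\sigma}\fint_B \dist{\cdot,E}\,d\mu$. So it suffices to show that $\sigma_{d,E}(B)\lesssim \rho_{d,E}(B)$, with constants depending only on $K_d$, $C_\mu$, $C_{\rho,E}$. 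We split according to whether $B$ meets $E$.

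\emph{Case $B\cap E\neq\emptyset$.} By \eqref{dist:<:rho:Kd}, $\sigma_{d,E}(B)\le \rho_{d,E}(B(x,K_d(2K_d+1)r))$. Choose $m\in\na$ with $2^m\ge K_d(2K_d+1)$. Note that $\rho_{d,E}$ is monotone under inclusion of concentric balls. Strictly, the cap $s<2K_dr$ grows with $r$, so enlarging the ball only enlarges the admissible set of $s$. Iterating the doubling property \eqref{rho:doubling} $m$ times then gives
\[
\sigma_{d,E}(B)\le C_{\rho,E}^m\,\rho_{d,E}(B).
\]

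\emph{Case $B\cap E=\emptyset$.} Here $B$ itself is $E$-free, so $B(x,s)\subset B\setminus E$ for every $s<r$, hence $\rho_{d,E}(B)\ge r$. We need an upper bound on $\sigma_{d,E}(B)$ in terms of $r$, or directly a lower bound on the average. For $z\in B$ the quasi-triangle inequality gives $\dist{x,E}\le K_d(r+\dist{z,E})$ and $\dist{z,E}\le K_d(r+\dist{x,E})$.

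\emph{Subcase $\dist{x,E}\ge 2K_d r$.} Then $\dist{z,E}\ge \dist{x,E}/K_d - r\ge \dist{x,E}/(2K_d)$ on $B$. Also $\dist{z,E}\le K_d(r+\dist{x,E})\le 2K_d\dist{x,E}$. So $\dist{\cdot,E}$ is comparable to the constant $\dist{x,E}$ on $B$, and the $RH_\infty$ bound is immediate.

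\emph{Subcase $\dist{x,E}<2K_d r$.} Then $\sigma_{d,E}(B)\le K_d(r+2K_dr)\lesssim r\le\rho_{d,E}(B)$, and Lemma \ref{lem:Ewp:rho:aver:dE} finishes.

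Combining the cases yields \eqref{def:RHinf} with constant depending only on $K_d,C_\mu,C_{\rho,E},\gamma,\sigma$.

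\emph{Doubling.} Write $w=\dist{\cdot,E}$ and $w(B)=\int_B w\,d\mu$. For a ball $B$ we estimate
\[
w(2B)\le \sigma_{d,E}(2B)\,\mu(2B)\le C_\mu\,\sigma_{d,E}(2B)\,\mu(B),
\]
and $\mu(B)\,\sigma_{d,E}(B)\le [w]_{RH_\infty}\, w(B)$. So it suffices to show that $\sigma_{d,E}$ is doubling; this is also the route indicated in Remark \ref{rmk:sigma:doubling}.

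By the above, $\sigma_{d,E}(2B)\lesssim \rho_{d,E}(2B)$ or, in the far-from-$E$ subcase, $\sigma_{d,E}(2B)\lesssim \dist{x,E}$. Conversely, $\rho_{d,E}(2B)\le C_{\rho,E}\,\rho_{d,E}(B)\le 2K_dC_{\rho,E}\,\sigma_{d,E}(B)$ by Lemma \ref{lemma:rho:sigma}\eqref{item:rho<sigma}. In the far subcase one checks $\sigma_{d,E}(B)\gtrsim \dist{x,E}$ in the same way as above. Here $\mu(\overline E)=0$ guarantees that the essential supremum over small balls around $x$ sees the values near $x$.

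We expect the main obstacle to be this case analysis for balls disjoint from, or far from, $E$, where \eqref{dist:<:rho:Kd} is unavailable. Care is also needed with the cap $2K_dr$ in the definition of $\rho_{d,E}$ and with essential versus pointwise suprema. The latter is handled using openness of balls and $\mu(\overline E)=0$, via Remark \ref{rmk:sigma:doubling}.
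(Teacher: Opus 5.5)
Your proof is correct and uses the same ingredients as the paper's: near $E$, the bound \eqref{dist:<:rho:Kd} together with the doubling of $\rho_{d,E}$ and Lemma \ref{lem:Ewp:rho:aver:dE}, and far from $E$, the comparability of $\dist{\cdot,E}$ with the constant $\dist{x,E}$. The paper only organizes it differently: it splits once on whether $\dist{x,E}<Mr$ (so $MB$ always meets $E$, and your middle subcase using $\rho_{d,E}(B)\ge r$ falls into Case I), and it gets doubling directly from $\fint_{2B}\dist{\cdot,E}\,d\mu\le\esssup_{MB}\dist{\cdot,E}\lesssim\rho_{d,E}(B)\lesssim\fint_B\dist{\cdot,E}\,d\mu$ instead of going through the doubling of $\sigma_{d,E}$ and Lemma \ref{lemma:rho:sigma}\eqref{item:rho<sigma}.
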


\begin{proof} Fix any $M > 2K_d$ (think $M:=3K_d$) and given a ball $B=B(x,r)$ let us consider the following two cases.

\subsection*{Case I: $\dist{x, E} < Mr$} Here we write
\begin{align*}
\fint_{2B} \dist{\cdot, E} \, d\mu \leq \esssup\limits_{2B} \dist{\cdot, E} \leq \esssup\limits_{MB} \dist{\cdot, E},
\end{align*}
with $MB:=B(x,Mr).$ Since $\dist{x, E} < Mr$ we have $MB \cap E \ne \emptyset$ and (since $\mu(\overline{E})=0$) by \eqref{dist:<:rho:Kd},
\begin{equation}\label{sup:d:MB}
\esssup\limits_{MB} \dist{\cdot, E} \leq \rho_{d, E}(MB),
\end{equation}
and, by the doubling property of $\rho_{d, E}$ with constant $C_{\rho, E},$ 
\begin{equation}\label{def:CrhoM}
\rho_{d, E}(MB) \leq C_{\rho, E, M} \, \rho_{d, E}(B)
\end{equation}
for some constant $C_{\rho, E, M}$ depending only on $M$ and $C_{\rho, E}$. Thus,
$$
\fint_{2B} \dist{\cdot, E} \, d\mu \leq C_{\rho, E, M} \, \rho_{d, E}(B),
$$
which followed by \eqref{rho:average:d} gives
$$
\fint_{2B} \dist{\cdot, E} \, d\mu \leq     \frac{C_1 C_{\rho, E, M} }{\gamma \sigma}   \fint_{B} \dist{\cdot, E} \, d\mu,
$$
and, from the fact that $\mu$ is doubling with constant $C_\mu$,
$$
\int_{2B} \dist{\cdot, E} \, d\mu \leq \frac{C_1  C_\mu C_{\rho, E, M} }{\gamma \sigma}   \int_{B} \dist{\cdot, E} \, d\mu,
$$
and $\dist{\cdot, E}$ is therefore doubling on $B$. On the other hand, from \eqref{sup:d:MB}, \eqref{def:CrhoM}, and \eqref{rho:average:d},
\begin{align*}
\esssup\limits_{B} \dist{\cdot, E}  \leq \esssup\limits_{MB} \dist{\cdot, E} \leq \rho_{d, E}(MB)&  \leq  C_{\rho, E, M} \, \rho_{d, E}(B)\\
& \leq \frac{C_1 C_{\rho, E, M}}{\gamma \sigma} \fint_B \dist{\cdot, E} \, d\mu,
\end{align*}
and the $RH_\infty$ reverse-H\"older inequality for $\dist{\cdot, E}$ holds on $B$.

\subsection*{Case II: $\dist{x, E} \geq Mr$} In this case we will prove that 
\begin{equation}\label{dyE:constant}
\left(\frac{1}{K_d} - \frac{2}{M} \right) \dist{x, E} \leq \dist{y, E} \leq K_d (1 + 2/M) \dist{x, E}, \quad \forall y \in B(x, 2r).
\end{equation}
That is, in this case $\dist{\cdot, E}$ behaves like a constant (say,  $\dist{x, E}$) in  $B(x, 2r)$. In particular, the inequalities \eqref{dyE:constant} imply
\begin{align*}
\int_{2B} \dist{y, E} \, d\mu(y) &\leq K_d (1 + 2/M) \dist{x, E}\mu(2B) \leq K_d C_\mu (1 + 2/M) \dist{x, E}\mu(B)\\
& \leq \frac{K_d^2 M  C_\mu (1 + 2/M)}{M- 2K_d} \int_{B} \dist{y, E} \, d\mu(y),
\end{align*}
and $\dist{\cdot, E}$ is then doubling on $B$. Also by \eqref{dyE:constant}
\begin{equation}\label{ineqs:case:2}
\esssup\limits_{y \in B} \dist{y, E}  \leq K_d (1 + 2/M) \dist{x, E} \leq \frac{K_d^2  (M + 2)}{(M-2K_d)} \fint_B \dist{y, E} \, d\mu(y)
\end{equation}
and the $RH_\infty$ reverse-H\"older inequality for $\dist{\cdot, E}$ holds on $B$ in this case as well. 

In order to prove the first inequality in \eqref{dyE:constant}, given $\eps > 0$, let $e_\eps \in E$ such that $d(y, e_\eps) < d(y, E) + \eps$, so that for $y \in B(x, 2r)$ and by keeping in mind that in this case $\dist{x, E} \geq Mr$, we have
\begin{align*}
K_d (d(y, E) + \eps)& > K_d d(y, e_\eps) \geq d(x,e_\eps) - K_d d(y,x) > d(x,e_\eps) - 2 r K_d\\
& \geq d(x,e_\eps) - \frac{2K_d}{M} d(x, E) \geq d(x, E) (1- 2K_d/M),
\end{align*}
and letting $\eps \to 0$ yields the first inequality in \eqref{dyE:constant}. To prove the second inequality in \eqref{dyE:constant}, given $y \in B(x, 2r)$ and $e \in E$, we do
\begin{align*}
d(y, E) \leq d(y, e) \leq K_d d(y,x) + K_d d(x,e) < 2K_d r +  K_d d(x,e) \leq \frac{2K_d}{M} d(x, E) +  K_d d(x,e),
\end{align*}
and the second inequality in \eqref{dyE:constant} follows by taking infimum in $e \in E$.  \end{proof}

We close this section with two observations derived from the above computations. The first one shows that, without the doubling assumption on $\rho_{d,E}$,  the weak porosity of $E$ implies that $\dist{\cdot, E}$ belongs to the weak $RH_\infty$ reverse-H\"older class; while the second one gives a characterization of weak porosity under the doubling assumption on $\rho_{d,E}$. 

\begin{coro}\label{coro:E:WP:wRH} Let $(X, d, \mu)$ be a space of homogenous type with constants $(K_d, C_\mu)$ where the $d$-balls are open sets. Let $E \subset X$ be a weakly porous set with constants $\gamma, \sigma \in (0,1)$ such that $\mu(\overline{E})=0$. Then, $\dist{\cdot, E}$ belongs to the weak $RH_\infty(X, d, \mu)$ class, meaning that there exists a constant $C > 0$, depending only on $K_d$, $C_\mu$, $\gamma$, and $\sigma$, such that 
\begin{equation}\label{dist:E:wRH}
\esssup\limits_{B} \dist{\cdot, E}  \leq C \fint_{MB} \dist{\cdot, E} \, d\mu, \quad \forall d\text{-ball } B,
\end{equation}
where $M:= K_d(1+2K_d)$. 
\end{coro}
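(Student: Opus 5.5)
The plan is to split into the same two cases as in the proof of Theorem \ref{thm:RH:doub}, now without the doubling of $\rho_{d,E}$. In its place we use a ball with enlarged radius, so that Lemma \ref{lem:Ewp:rho:aver:dE} can be applied directly on the dilated ball. Fix $B=B(x,r)$ and set $M:=K_d(1+2K_d)$.

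\emph{Case I: $B\cap E\neq\emptyset$.} Since $\mu(\overline{E})=0$, Lemma \ref{lemma:rho:sigma}\eqref{item:sigma<rho:sigma} applies. In the form \eqref{dist:<:rho:Kd} it gives
$$\esssup_B \dist{\cdot,E}=\sigma_{d,E}(B)\leq \rho_{d,E}(B(x,K_d(2K_d+1)r))=\rho_{d,E}(MB).$$
Applying Lemma \ref{lem:Ewp:rho:aver:dE} to the ball $MB$ then yields
$$\rho_{d,E}(MB)\leq \frac{C_1}{\gamma\sigma}\fint_{MB}\dist{\cdot,E}\,d\mu,$$
which is \eqref{dist:E:wRH} with $C=C_1/(\gamma\sigma)$. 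Note that the choice $M=K_d(1+2K_d)$ is exactly what \eqref{dist:<:rho:Kd} produces, which explains the constant in the statement.

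\emph{Case II: $B\cap E=\emptyset$.} Then $\dist{x,E}\geq r$, but this does not give the comparability \eqref{dyE:constant} on $B$ with a good constant, since $M$ there needed $\dist{x,E}\ge Mr$ with $M>2K_d$. The plan is to split further.

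If $\dist{x,E}\geq 4K_d r$, repeat the Case II computation of Theorem \ref{thm:RH:doub} with $M'=4K_d$ on $B(x,r)$. This gives $\dist{y,E}\simeq\dist{x,E}$ for all $y\in B$, with constants depending only on $K_d$. Hence $\esssup_B\dist{\cdot,E}\lesssim \fint_B \dist{\cdot,E}\,d\mu$. Since $\dist{\cdot,E}\ge c\,\dist{x,E}$ on $B$ while $\fint_{MB}$ is an average over a larger set, we need a lower bound for $\fint_{MB}$. We get it by using $\int_{MB}\geq\int_B$ and $\mu(MB)\leq C\mu(B)$ by doubling.

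If $r\le\dist{x,E}<4K_dr$, then $B(x,4K_dr)$ meets $E$. Apply Case I to $\tilde B=B(x,4K_d r)$. This requires $M\tilde B\subset$ some fixed multiple of $B$, which overshoots $MB$. Instead, use Lemma \ref{lemma:rho:sigma}\eqref{item:sigma<rho:sigma} directly with $B$ replaced by a ball that meets $E$ and is comparable to $B$. Alternatively, use the near-constancy: for $y\in B$,
$$\dist{y,E}\le K_d(r+\dist{x,E})\lesssim \dist{x,E}.$$
On the other hand, $B(x,\dist{x,E})$ is $E$-free and lies inside $MB$ when $\dist{x,E}\lesssim r$. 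Then by Lemma \ref{lemma:rho:sigma}\eqref{item:dzE>tau} and doubling, $\fint_{MB}\dist{\cdot,E}\,d\mu\gtrsim \dist{x,E}$.

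This last sub-case is the main obstacle: the enlargement must be tuned so that the $E$-free ball around $x$, shrunk by $2K_d$, fits into $MB$. I expect $M=K_d(1+2K_d)$ to suffice. Failing that, I would observe that for $y\in B$, $\dist{y,E}\le K_d(r+\dist{x,E})$ together with $\dist{x,E}\geq r$ already forces a comparability with the average over $B(x,r/(2K_d))\subset MB$.
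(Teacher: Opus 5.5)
Your proof is correct and uses the same two mechanisms as the paper's proof. When $B$ meets $E$ you apply \eqref{dist:<:rho:Kd} and then Lemma \ref{lem:Ewp:rho:aver:dE} on $MB$. When $\dist{x,E}$ is large compared with $r$ you use the near-constancy \eqref{dyE:constant}, together with $\int_{MB}\geq\int_B$ and doubling.

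What differs is the decomposition, and yours is the more careful one. The paper reuses the split $\dist{x,E}<Mr$ versus $\dist{x,E}\geq Mr$ from Theorem \ref{thm:RH:doub} and invokes \eqref{dist:<:rho:Kd} in the whole first case. But \eqref{dist:<:rho:Kd} needs $B\cap E\neq\emptyset$, which fails when $r\leq\dist{x,E}<Mr$. In that regime the paper's intermediate bound $\esssup_B\dist{\cdot,E}\leq\rho_{d,E}(MB)$ can actually be false. For example, take $\mathbb{R}$ with the Euclidean distance (so $K_d=1$ and $M=3$), $E=\{0\}$ and $B=B(2,1)$: then $\esssup_B\dist{\cdot,E}=3$ while $\rho_{d,E}(MB)=5/2$. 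What one does get there is $\esssup_B\dist{\cdot,E}\leq 2K_d\,\rho_{d,E}(MB)$, because $B(x,\dist{x,E})\subset MB$ is $E$-free. In Theorem \ref{thm:RH:doub} such losses are absorbed by the doubling of $\rho_{d,E}$, which is not available here. Your split according to whether $B$ meets $E$ isolates exactly this regime and treats it directly.

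The step you hedge on does work. If $r\leq\dist{x,E}<4K_dr$, consider the ball $B(x,\dist{x,E}/(2K_d))$, on which $\dist{\cdot,E}\geq\dist{x,E}/(2K_d)$ by \eqref{dzE:tau}. Its radius lies between $r/(2K_d)$ and $2r<Mr$, so it sits inside $MB$ and has measure comparable to $\mu(MB)$. The unshrunk ball $B(x,\dist{x,E})$ need not lie in $MB$ when $K_d<3/2$, but you never need it.

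Finally, your fallback observation makes the threshold $4K_dr$ unnecessary. Whenever $B\cap E=\emptyset$ one has $\dist{x,E}\geq r$. Hence $\dist{\cdot,E}<K_d(r+\dist{x,E})\leq 2K_d\dist{x,E}$ on $B$, and $\dist{\cdot,E}\geq\dist{x,E}/(2K_d)$ on $B(x,r/(2K_d))$. This gives $\esssup_B\dist{\cdot,E}\lesssim\fint_B\dist{\cdot,E}\,d\mu\lesssim\fint_{MB}\dist{\cdot,E}\,d\mu$ in one stroke, with constants depending only on $K_d$ and $C_\mu$.
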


\begin{proof} We have $M:= K_d(1+2K_d)$, in particular $M \geq 3K_d$, and given a $d$-ball $B=B(x,r)$ consider Cases I and II as in the proof of Theorem  \ref{thm:RH:doub}. In Case I use \eqref{dist:<:rho:Kd} followed by \eqref{rho:average:d} to obtain
$$
\esssup\limits_{B} \dist{\cdot, E}  \leq \rho_{d, E}(MB) \leq \frac{C_1}{\gamma \sigma} \fint_{MB} \dist{\cdot, E} \, d\mu,
$$
which together with \eqref{ineqs:case:2} from Case II, implies \eqref{dist:E:wRH}. 
\end{proof}

\begin{coro} Let $(X, d, \mu)$ be a space of homogeneous type where the $d$-balls are open sets and the Lebesgue differentiation theorem holds true. Let $E \subset X$ be a nonempty set such that $\rho_{d, E}$ is doubling. Then the following are equivalent:
\begin{enumerate}[(i)]
\item\label{E:wp} $E$ is weakly porous,
\item\label{rho:C:aver:B:dE} $\mu(\overline{E})=0$ and there exists $C >0$ such that
\begin{equation}\label{rho<C:aver:B:dE}
\rho_{d, E}(B) \leq C \fint_B \dist{\cdot, E} \, d\mu, 
\end{equation}
for every $d$-ball with $B \cap E \neq \emptyset$. 
\end{enumerate}
\end{coro}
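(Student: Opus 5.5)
The plan is to prove the two implications separately. The direction (i)$\Rightarrow$(ii) should be immediate from what precedes. The direction (ii)$\Rightarrow$(i) is a covering argument that reverses the reasoning of Lemma \ref{lem:Ewp:rho:aver:dE}.

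\emph{(i)$\Rightarrow$(ii).} Remark \ref{rmk:mu:Ebar:0} gives $\mu(\overline{E})=0$; this is where the Lebesgue differentiation theorem is used. Lemma \ref{lem:Ewp:rho:aver:dE} then gives \eqref{rho<C:aver:B:dE} for every $d$-ball, with $C=C_1/(\gamma\sigma)$.

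\emph{(ii)$\Rightarrow$(i).} Fix $B=B(x,r)$ and set $B':=B(x,r/(4K_d))$.

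First suppose $B'\cap E=\emptyset$. Take the single ball $B'$. Since $\rho_{d,E}(B)<2K_dr$, its radius satisfies $r/(4K_d)\ge \rho_{d,E}(B)/(8K_d^2)$. Also $\mu(B')\ge c\,\mu(B)$ by doubling of $\mu$. So {\bf WP}(i)–(iv) hold with constants depending only on $K_d$ and $C_\mu$.

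Now suppose $B'\cap E\neq\emptyset$. The first step is to bound $\esssup_{B'}\dist{\cdot,E}$ above by a multiple of $\rho_{d,E}(B')$:
\begin{itemize}
\item By \eqref{dist:<:rho:Kd}, $\sigma_{d,E}(B')\le \rho_{d,E}(B(x,K_d(2K_d+1)r/(4K_d)))$.
\item Iterating \eqref{rho:doubling} boundedly many times (depending on $K_d$) gives $\sigma_{d,E}(B')\le C'\rho_{d,E}(B')$.
\end{itemize}
Next, for a small $t>0$, consider the $\mu$-measurable set
\[
F:=\{z\in B': \dist{z,E}>t\rho_{d,E}(B')\}.
\]
Splitting $\int_{B'}\dist{\cdot,E}\,d\mu$ over $F$ and $B'\setminus F$, and using \eqref{rho<C:aver:B:dE} on $B'$, gives
\[
\rho_{d,E}(B')\mu(B')/C \le t\rho_{d,E}(B')\mu(B')+C'\rho_{d,E}(B')\mu(F).
\]
Here $\rho_{d,E}(B')>0$ because $\mu(\overline{E})=0$ (Remark \ref{rmk:sigma:doubling}). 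Choosing $t:=1/(2C)$ yields $\mu(F)\ge \mu(B')/(2CC')\gtrsim\mu(B)$.

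Set $s:=t\rho_{d,E}(B')$. Then $s<2K_d t\, r/(4K_d)$, and $t<1$ may be assumed. Hence for $z\in F$ we have $B(z,s)\subset B$ by the quasi-triangle inequality, and $B(z,s)\cap E=\emptyset$ because $\dist{z,E}>s$.

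The last step is a Vitali-type selection. Choose a maximal pairwise disjoint family of balls $B(z_j,s/(3K_d^2))$ with $z_j\in F$.
\begin{itemize}
\item The family is finite. All these balls sit inside $B$, and each has $\mu$-measure bounded below by a fixed positive multiple of $\mu(B)$, by doubling; the multiple depends on the ratio $s/r$, but it is fixed once $B$ is fixed.
\item By maximality and the quasi-triangle inequality, $F\subset\bigcup_j B(z_j,c_K s)$ for some $c_K$ depending only on $K_d$.
\item By doubling, $\sum_j\mu(B(z_j,s/(3K_d^2)))\gtrsim\mu(F)\gtrsim\mu(B)$, which is {\bf WP}(iii).
\end{itemize}
The radii equal $s/(3K_d^2)=t\rho_{d,E}(B')/(3K_d^2)$. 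By doubling of $\rho_{d,E}$ this is at least $\gamma\rho_{d,E}(B)$ with $\gamma:=t/(3K_d^2C_{\rho,E}^{m})$ for a suitable $m$ depending on $K_d$, which is {\bf WP}(ii). Condition {\bf WP}(iv) is trivial.

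The main obstacle is keeping the selected balls inside $B$ while keeping their radii comparable to $\rho_{d,E}(B)$. Working in the shrunken ball $B'$ addresses the first issue, and the doubling of $\rho_{d,E}$ addresses the second.
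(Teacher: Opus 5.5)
Your proof is correct. Your (i)$\Rightarrow$(ii) coincides with the paper's, but for (ii)$\Rightarrow$(i) you take a genuinely different route.

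The paper never constructs the porosity balls. It proceeds in four steps:
\begin{enumerate}
\item It combines (ii) with \eqref{dist:<:rho:Kd} and the doubling of $\rho_{d,E}$ to get $\rho_{d,E}(B)\simeq\fint_B\dist{\cdot,E}\,d\mu\simeq\esssup_B\dist{\cdot,E}$ on balls meeting $E$.
\item It reruns Cases I and II of Theorem~\ref{thm:RH:doub} to see that $\dist{\cdot,E}$ is a doubling $RH_\infty$ weight.
\item It passes to $\dist{\cdot,E}^{-\alpha}\in A_1$ through Lemma~\ref{lemma:RH>Ap} and Remark~\ref{rmk:idea:proof}.
\item It quotes \cite[Theorem 1.1]{AGG1} for the implication from $A_1$ to weak porosity.
\end{enumerate}

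You instead build the balls by hand. The upper bound $\esssup_{B'}\dist{\cdot,E}\lesssim\rho_{d,E}(B')$, played against the lower bound from (ii), forces the superlevel set $F$ to fill a fixed fraction of $B'$. A Vitali-type selection at the single scale $s\simeq\rho_{d,E}(B')\simeq\rho_{d,E}(B)$ then delivers \textbf{WP}(i)--(iv).

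Your route has several advantages:
\begin{enumerate}
\item It is self-contained and gives explicit $\gamma,\sigma$ in terms of $K_d$, $C_\mu$, $C$, $C_{\rho,E}$.
\item It avoids both the $A_\infty$ machinery and the external theorem.
\item It does not use the Lebesgue differentiation theorem in this direction, so that hypothesis is needed only to get $\mu(\overline{E})=0$ in (i)$\Rightarrow$(ii).
\item It applies (ii) only to balls meeting $E$, exactly as hypothesized. By contrast, transplanting Case I of Theorem~\ref{thm:RH:doub} needs the comparability on balls $B$ that may miss $E$ while $MB$ meets it, which requires a short supplementary argument.
\end{enumerate}
The paper's route is shorter on the page, stays within its theme of reducing everything to Muckenhoupt-weight facts, and produces $\dist{\cdot,E}^{-\alpha}\in A_1$ along the way.

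One small slip to fix: the claim ``$t<1$ may be assumed, hence $B(z,s)\subset B$'' fails for large $K_d$. For $w\in B(z,s)$ you only get $d(x,w)<r/4+K_d s$, with $s$ possibly close to $r/2$. This is harmless, and either of two fixes works:
\begin{enumerate}
\item Also take $t\le 1/(2K_d)$. Your computation gives $\mu(F)\ge(1/C-t)\,\mu(B')/C'$, which only improves as $t$ decreases.
\item Observe that only the selected balls $B(z_j,s/(3K_d^2))$ need to lie in $B\setminus E$, and they do for any $t<1$.
\end{enumerate}
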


\begin{proof} $\eqref{E:wp}  \Rightarrow \eqref{rho:C:aver:B:dE}$. The inequality \eqref{rho<C:aver:B:dE} follows from Lemma \ref{lem:Ewp:rho:aver:dE} for any $d$-ball $B$, whereas $\mu(\overline{E})=0$ is due to Remark \ref{rmk:mu:Ebar:0}. For the implication $\eqref{rho:C:aver:B:dE}   \Rightarrow  \eqref{E:wp}$, notice that \eqref{rho:C:aver:B:dE} along with \eqref{dist:<:rho:Kd} and the doubling property of $\rho_{d, E}$ implies 
\begin{equation}\label{coro:1}
\rho_{d, E}(B) \leq C \fint_B \dist{\cdot, E} \, d\mu \leq C \esssup_B \dist{\cdot, E} \leq  CC_2 \, \rho_{d, E}(B)
\end{equation}
for some $C_2 > 1$ (depending only on $K_d$ and the doubling constant of $\rho_{d, E}$) and for every $d$-ball with $B \cap E \neq \emptyset$. Now, fix $M > 2K_d$ and consider the Cases I and II as in the proof of Theorem \ref{thm:RH:doub} to obtain that $\dist{\cdot, E}$ is a doubling weight in $(X, d, \mu)$ that satisfies \eqref{def:RHinf} (by using \eqref{coro:1} instead of \eqref{rho:average:d} in Case I). Then, by Lemma \ref{lemma:RH>Ap} and Remark \ref{rmk:idea:proof} there exists $\alpha > 0$ such that $\dist{\cdot, E}^{-\alpha} \in A_1(X, d, \mu)$, which implies that $E$ is weakly porous by \cite[Theorem 1.1]{AGG1}.\end{proof}

\section{A short proof of Theorem \ref{thm:A1:rho:doub} ($A_1$ implies $\rho_{d, E}$ doubling)}\label{sec:A1:rho:doub}

\begin{lem} Let $(X, d, \mu)$ be a space of homogeneous type where the $d$-balls are open sets. Let $E \subset X$ be a nonempty set such that $\dist{\cdot, E}^{-\alpha} \in A_1(X, d, \mu)$ for some $\alpha >0$. Then, $\dist{\cdot, E}^{\alpha}$ is a doubling weight satisfying  the $RH_\infty(X, d, \mu)$ reverse-H\"older inequality and
\begin{equation}\label{rho:A1:aver:B:dE}
\rho_{d,E}(B) \leq 2K_d[\dist{\cdot, E}^{-\alpha}]_{A_1}^{1/\alpha} \left( \fint_B \dist{\cdot, E}^{\alpha} \, d\mu \right)^{1/\alpha}, \quad \forall d\text{-ball } B. 
\end{equation}
\end{lem}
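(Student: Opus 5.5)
Set $w:=\dist{\cdot, E}^{-\alpha}\in A_1(X,d,\mu)$, so that $w^{-1}=\dist{\cdot, E}^{\alpha}$. The plan is to read off the first two conclusions from Lemma \ref{lem:basic:Ainf}\eqref{lem:basic:Ainf:1}. For that we need $w^{-1}\in L^1_{\rm loc}$, and we get it from the $A_1$ condition itself. For any ball $B$, $\essinf_B w>0$ is forced: otherwise the product in \eqref{w:A1} would be infinite, because $\fint_B w\,d\mu>0$. That positivity holds since $w=\infty$ on $\overline E$ and $w>0$ off $\overline{E}$, and the $\mu$-measurability is Remark \ref{rmk:mu:Ebar:0}. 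Hence $w^{-1}\le (\essinf_B w)^{-1}<\infty$ $\mu$-a.e. on $B$, so $w^{-1}$ is bounded on every ball. Alternatively, one can note directly that $\dist{\cdot,E}^\alpha\le (2K_d r')^\alpha$ on any ball meeting $E$ (Lemma \ref{lemma:rho:sigma}\eqref{item:sigma<r}), and that it is locally bounded in any case since balls are bounded. Lemma \ref{lem:basic:Ainf}\eqref{lem:basic:Ainf:1} then gives that $\dist{\cdot,E}^\alpha$ is doubling with constant $C_\mu^2[w]_{A_1}$ and satisfies \eqref{w:inv:RH:inft}, which is the $RH_\infty$ inequality with constant $[w]_{A_1}$.

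For \eqref{rho:A1:aver:B:dE}, start from Lemma \ref{lemma:rho:sigma}\eqref{item:rho<sigma}, $\rho_{d,E}(B)\le 2K_d\,\sigma_{d,E}(B)$. This needs $\mu(\overline E)=0$ only to guarantee nonemptiness; its proof just uses \eqref{dzE:tau}. If $\mu(\overline{E})>0$, then $w=\infty$ on a set of positive measure, contradicting local integrability, so $\mu(\overline E)=0$ follows anyway. Next we use the identity
\[
\sigma_{d,E}(B)=\esssup_B \dist{\cdot,E}=\Bigl(\esssup_B \dist{\cdot,E}^{\alpha}\Bigr)^{1/\alpha}.
\]
Combining this with \eqref{w:inv:RH:inft} for $w^{-1}=\dist{\cdot,E}^\alpha$ gives
\[
\rho_{d,E}(B)\le 2K_d\Bigl([w]_{A_1}\fint_B\dist{\cdot,E}^{\alpha}d\mu\Bigr)^{1/\alpha},
\]
which is exactly \eqref{rho:A1:aver:B:dE}.

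The only delicate point is the measure-theoretic bookkeeping: the measurability of $\dist{\cdot,E}$, the fact that $\mu(\overline E)=0$, and the use of essential rather than pointwise suprema in Lemma \ref{lemma:rho:sigma}\eqref{item:rho<sigma}. For the last of these, the balls $B(y,\tau/(2K_d))$ are open, nonempty and of positive measure, so the pointwise lower bound on them passes to the essential supremum. Beyond that, the argument is a direct chaining of the earlier lemmas.
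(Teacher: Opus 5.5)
Your proof is correct and is essentially the paper's argument: the paper bounds $\dist{\cdot,E}$ from below on a sub-ball $F$ of an $E$-free ball via \eqref{dzE:tau} and then applies \eqref{fint:F:fint:B:u}, while you package that same sub-ball bound as part (ii) of Lemma \ref{lemma:rho:sigma} and then apply \eqref{w:inv:RH:inft} to $\dist{\cdot,E}^{\alpha}$, which is the same chain of inequalities. Your explicit check that $\dist{\cdot,E}^{\alpha}$ is locally bounded, hence in $L^1_{\rm loc}$, is needed to invoke part (i) of Lemma \ref{lem:basic:Ainf}, and it supplies a hypothesis the paper leaves implicit.
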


\begin{proof} By Lemma \ref{lem:basic:Ainf} with  $w:=\dist{\cdot, E}^{-\alpha} \in A_1(X, d, \mu)$, we get that $\dist{\cdot, E}^{\alpha}$ is a doubling weight with constant $C_\mu^2 [\dist{\cdot, E}^{-\alpha}]_{A_1}$ that satisfies the $RH_\infty(X, d, \mu)$ reverse-H\"older inequality with constant $[\dist{\cdot, E}^{-\alpha}]_{A_1}$. Now, fix a $d$-ball $B=B(x,r)$. Since $\dist{\cdot, E}^{-\alpha} \in L^1_{\rm{loc}}(X, d, \mu)$ we must have $\mu(\overline{E})=0$ and therefore $\rho_{d,E}(B) >0$. Then, given any $0 < s < 2K_d r$ and $B(y,s) \subset B\setminus E$, by the inequality \eqref{dzE:tau} applied with $C=2K_d$ we obtain
$$
\dist{z, E} \geq \frac{s}{2K_d}, \quad \forall z \in B(y, s/K_d),
$$
which combined with \eqref{fint:F:fint:B:u} used with $u := \dist{\cdot, E}^{\alpha}$ and $F := B(y, s/K_d)$ gives
$$
\left(\frac{s}{2K_d} \right)^\alpha \leq \fint_F \dist{\cdot, E}^{\alpha} \, d\mu \leq [\dist{\cdot, E}^{-\alpha}]_{A_1} \fint_B \dist{\cdot, E}^{\alpha} \, d\mu
$$
and  \eqref{rho:A1:aver:B:dE} follows by taking supremum in $s$. \end{proof}

\subsection*{Proof of Theorem \ref{thm:A1:rho:doub}} Set $K':= 2K_d(1+2K_d)$ (notice $2/K' = 1/[K_d(1+2K_d)] \leq 1/3$), and given a ball $B=B(x, r)$ set $B':=B(x, r/K')$. Assume that $\rho_{d,E}(B) >0$ (otherwise \eqref{doub:rho} follows trivially) and consider the following three cases: 
\subsection*{Case I: $B' \cap E \ne \emptyset$} Then, by \eqref{dist:<:rho:Kd} applied to $B'$, 
\begin{equation}\label{dzE:B'}
\esssup\limits_{B'} \dist{\cdot, E}  \leq \rho_{d, E}(B') \leq \rho_{d, E}(B(x, r/2)).
\end{equation}
(Notice that $\dist{\cdot, E}^{-\alpha} \in A_1(X, d, \mu)$ implies $\dist{\cdot, E}^{-\alpha} \in L^1_{\rm{loc}}(X, d, \mu)$; in particular, $\mu(\overline{E}) =0$.) By the doubling property of $\dist{\cdot, E}^{\alpha}$ there is a constant $C_3 > 0$ (depending only on $C_\mu$,  $[\dist{\cdot, E}^{-\alpha}]_{A_1}$, and $K_d$) such that
$$
\int_{B} \dist{\cdot, E}^{\alpha} \, d\mu \leq C_3 \int_{B'} \dist{\cdot, E}^{\alpha},
$$
which together with \eqref{rho:A1:aver:B:dE} and \eqref{dzE:B'} gives 
\begin{align*}
\rho_{d,E}(B) & \leq 2K_d[\dist{\cdot, E}^{-\alpha}]_{A_1}^{1/\alpha} \left( \fint_B \dist{\cdot, E}^{\alpha} \, d\mu \right)^{1/\alpha}\\
& \leq 2K_d (C_3 [\dist{\cdot, E}^{-\alpha}]_{A_1})^{1/\alpha} \left( \fint_{B'} \dist{\cdot, E}^{\alpha} \, d\mu \right)^{1/\alpha} \\
& \leq 2K_d (C_3 [\dist{\cdot, E}^{-\alpha}]_{A_1})^{1/\alpha} \rho_{d, E}(B(x, r/2)),
\end{align*}
and \eqref{doub:rho} holds true in this case. 
\subsection*{Case II: $B' \cap E = \emptyset$ and $B \cap E \ne \emptyset$} Fix $z \in B \cap E$ to obtain
$$
\dist{y, E} \leq d(y, z) \leq K_d d(x, z) + K_d d(x, y) < 2K_d r, \quad \forall y \in B.
$$
Also, since $B' =B(x, r/K') \subset B(x, 2r/K') \setminus E$ and $0 < r/K' < 2K_d (2r/K')$, it follows that $r/K' \leq \rho_{d,E}(B(x, 2r/K'))$. Hence,
$$
\sup\limits_{B} \dist{\cdot, E} \leq 2K_d K' \, \rho_{d,E}(B(x, 2r/K')),
$$
which combined with \eqref{rho:A1:aver:B:dE}, gives 
$$ 
\rho_{d,E}(B) \leq C_4  \, \rho_{d,E}(B(x, 2r/K'))
$$ 
for a constant $C_4 \geq 1$ depending only on $K_d$, $\alpha$, and $[\dist{\cdot, E}^{-\alpha}]_{A_1}$.

\subsection*{Case III: $B \cap E = \emptyset$} Here we notice that $\tfrac{1}{2}B = \tfrac{1}{2}B \setminus E$ with $0 < r/2 < 2 K_d (r/2)$, and then $r/2 \leq \rho_{d, E}(\tfrac{1}{2}B)$. On the other hand, since $\rho_{d, E}(B) >0$ we get $\rho_{d, E}(B) \leq 2K_d r.$ Thus, $\rho_{d, E}(B) \leq 2K_d r \leq 4  K_d \rho_{d, E}(\tfrac{1}{2}B).$\qed

\begin{remark} Notice that the proof of Theorem \ref{thm:A1:rho:doub} only required \eqref{rho:A1:aver:B:dE}, \eqref{dist:<:rho:Kd} and the doubling property of $\dist{\cdot, E}^\alpha$. Thus, the doubling property of $\rho_{d, E}$ can be proved similarly by replacing \eqref{rho:A1:aver:B:dE} with \eqref{rho:average:d} (think $\alpha =1$). This observation along with Theorem \ref{thm:RH:doub} yields then the following corollary.
\end{remark}

\begin{coro}\label{coro:dE:r:doub} Let $(X, d, \mu)$ be a space of homogeneous type where the $d$-balls are open sets and let $E \subset X$ be a weakly porous set with $\mu(\overline{E}) = 0$. Then, $\dist{\cdot, E}$ is doubling if and only if $\rho_{d, E}$ is so.
\end{coro}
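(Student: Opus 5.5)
The plan is to prove the two implications separately, in each case reducing to results already established in the paper.

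\emph{($\rho_{d,E}$ doubling $\Rightarrow$ $\dist{\cdot, E}$ doubling).} This is essentially Theorem \ref{thm:RH:doub}. That proof assumes weak porosity, $\mu(\overline{E})=0$ and $\rho_{d,E}$ doubling, and it establishes that $\dist{\cdot,E}$ is doubling (together with $RH_\infty$). It only uses Lemma \ref{lem:Ewp:rho:aver:dE}, Lemma \ref{lemma:rho:sigma} and the doubling of $\rho_{d,E}$, and never Lebesgue differentiation. So this direction is immediate.

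\emph{($\dist{\cdot, E}$ doubling $\Rightarrow$ $\rho_{d,E}$ doubling).} Here I would rerun the proof of Theorem \ref{thm:A1:rho:doub} with $\alpha=1$, using \eqref{rho:average:d} in place of \eqref{rho:A1:aver:B:dE}. That is, \eqref{rho:average:d} gives $\rho_{d,E}(B)\le \frac{C_1}{\gamma\sigma}\fint_B \dist{\cdot,E}\,d\mu$. Set $K'=2K_d(1+2K_d)$ and $B'=B(x,r/K')$, and split into the same three cases.

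\textbf{Case I: $B'\cap E\neq\emptyset$.} Since $\mu(\overline{E})=0$, \eqref{dist:<:rho:Kd} applies to $B'$ and gives $\esssup_{B'}\dist{\cdot,E}\le \rho_{d,E}(B(x,r/2))$. Iterating the doubling of $\dist{\cdot,E}$ finitely many times, with the number of steps depending on $K'$, gives $\int_B \dist{\cdot,E}\le C_3\int_{B'}\dist{\cdot,E}$. Dividing by $\mu(B)\ge\mu(B')$ then yields
\[
\rho_{d,E}(B)\le \frac{C_1C_3}{\gamma\sigma}\fint_{B'}\dist{\cdot,E}\le \frac{C_1C_3}{\gamma\sigma}\,\rho_{d,E}(\tfrac12 B).
\]

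\textbf{Case II: $B'\cap E=\emptyset$, $B\cap E\ne\emptyset$.} We have $\sup_B\dist{\cdot,E}<2K_dr$. Also $r/K'\le\rho_{d,E}(B(x,2r/K'))\le\rho_{d,E}(\tfrac12B)$, using monotonicity of $\rho$ in the ball and $2/K'\le 1/2$. Combining these with \eqref{rho:average:d}, and bounding the average by the supremum, gives $\rho_{d,E}(B)\le C\,\rho_{d,E}(\tfrac12 B)$.

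\textbf{Case III: $B\cap E=\emptyset$.} This is verbatim the earlier argument: $\rho_{d,E}(B)\le 2K_dr\le 4K_d\rho_{d,E}(\tfrac12B)$.

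The main point needing care is Case I. We need that the positivity and measurability facts used in Lemma \ref{lemma:rho:sigma} hold, which they do because $\mu(\overline{E})=0$ and balls are open. We also need the inclusion $B'\subset B(x,r/2)$, together with monotonicity of $\rho_{d,E}$ under inclusion of balls, to pass from $\rho_{d,E}(B')$ to $\rho_{d,E}(\tfrac12B)$. This monotonicity deserves a check, since the defining supremum is cut off at $2K_d$ times the radius. This is harmless: a smaller ball with a smaller cutoff only yields a smaller set of admissible $s$. Finally, the constants depend on $K_d$, $C_\mu$, $\gamma$, $\sigma$ and the doubling constant, as stated.
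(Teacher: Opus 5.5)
Your proposal is correct and follows the paper's own argument. The paper gets one direction from Theorem \ref{thm:RH:doub}, and the other by rerunning the proof of Theorem \ref{thm:A1:rho:doub} with \eqref{rho:average:d} in place of \eqref{rho:A1:aver:B:dE} (i.e., $\alpha=1$), exactly as you do. Your monotonicity check is what actually closes Case II, where the paper stops at $\rho_{d,E}(B(x,2r/K'))$. In Case I it is not needed, because \eqref{dist:<:rho:Kd} applied to $B'$ already lands on $B(x,r/2)$, since $K_d(2K_d+1)/K'=1/2$.
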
  

\section{A conjecture}

Let us close this note by venturing the following conjecture: \emph{Let $(X, d, \mu)$ be a space of homogenous type where the $d$-balls are open sets, then  a nonempty set  $E \subset X$ with $\mu(\overline{E})=0$ is weakly porous if and only if $\dist{\cdot, E} \in RH_\infty(X, d, \mu)$, that is, if and only if \eqref{def:RHinf} is true.} 

\begin{remark}\label{rmk:conj:1} Notice that the implication ``$\dist{\cdot, E} \in RH_\infty(X, d, \mu) \Rightarrow$ $E$ weakly porous'' is true in any space of homogenous type $(X, d, \mu)$ where the $d$-balls are open sets, where the Lebesgue differentiation theorem holds true, and where every weight in $RH_\infty(X, d, \mu)$ is a doubling weight. This follows from Lemma \ref{lemma:RH>Ap} and Remark \ref{rmk:idea:proof}. 
\end{remark}

\begin{remark} Let $(X, d, \mu)$ be a space of homogenous type where the $d$-balls are open sets. The measure $\mu$ is said to possess the \emph{annular decay property} if there are constants $C >0$ and $\theta \in (0,1]$ such that
$$
\mu(B \setminus (1-\delta) B) \leq C \delta^\theta \mu(B), \quad \forall d\text{-ball } B, \delta \in (0,1).
$$
The above conjecture is true in the aforementioned context of Mudarra's work if $\mu$ has the annular decay property, since by \cite[Lemma 7.2]{Mud25} the weak porosity of $E$ then implies the doubling property for $\rho_{d, E}$, and then that of $\dist{\cdot, E}$ by Corollary \ref{coro:dE:r:doub}. Thus, $\dist{\cdot, E} \in RH_\infty(X, d, \mu)$ due to Corollary \ref{coro:E:WP:wRH}. Conversely, the implication ``$\dist{\cdot, E} \in RH_\infty(X, d, \mu) \Rightarrow$ $E$ weakly porous'' follows from Remark \ref{rmk:conj:1} and the fact every weight $u \in RH_\infty(X, d, \mu)$ is a doubling weight whenever $\mu$ has the annular decay property. This last claim is a consequence of \eqref{fint:F:fint:B:u}, by taking $F:=B \setminus (1-\delta) B$ and choosing $\delta=\delta(C, \theta, [u]_{RH_\infty})$ close to 1. 
\end{remark}

\section*{Acknowledgements} The author would like to express his gratitude to Hugo Aimar and  Ignacio G\'omez-Vargas from the Instituto de Matem\'atica Aplicada del Litoral ``Dra. Eleonor Harboure'', CONICET, UNL, IMAL, Argentina, for their detailed reading of the manuscript, their generous and insightful comments, and for providing valuable bibliographic references.

\end{document}